\documentclass[10pt]{amsart}
\usepackage{amsmath,amsfonts, amssymb,amsthm,amscd,latexsym,euscript,xypic, verbatim}
\usepackage{epsf}
\usepackage{epsfig}
\usepackage{amstext}
\usepackage{xspace}
\usepackage{amsfonts}
\usepackage{amsmath}
\usepackage{amssymb}
\usepackage{xspace}
\usepackage{graphicx}
\usepackage[active]{srcltx}
\usepackage[all]{xy}
\usepackage{graphics}
\usepackage{color}
\usepackage{caption}
\usepackage{subcaption}
\usepackage{blkarray}
\usepackage{multirow}
\swapnumbers
\theoremstyle{plain}
\newtheorem{prop}{Proposition}[section]

\newtheorem{cor}[prop]{Corollary}
\newtheorem{lemma}[prop]{Lemma}
\theoremstyle{definition}
\newtheorem{point}[prop]{}

\newtheorem*{Def*}{Definition}

\newtheorem{example}[prop]{Example}

\newtheorem*{notation*}{Notation}

\newcommand{\calc}{\mathcal C}

\pagestyle{plain}

\xyoption{2cell}

\begin{document}

\newcommand{\C}{\textbf C}
\newcommand{\N}{\mathbb N}
\newcommand{\Z}{\mathbb Z}
\newcommand{\R}{\mathbb R}

\newcommand{\n}{$n$\nobreakdash}

\title{Combinatorial presentation of multidimensional persistent homology}
\date{\today}
\author{W.~ Chacholski$^1$\quad M.~Scolamiero \quad F.~Vaccarino$^2$}

\address[Wojciech Chach\'{o}lski]{
       Department of Mathematics\\
       KTH\\
       S 10044 Stockholm\\
       Sweden}
\email{wojtek@math.kth.se}

\address[Martina~Scolamiero]{
 	Department of Mathematics\\
	KTH\\
	S 10044 Stockholm\\
         Sweden}
\email{scola@kth.se}

\address[Francesco~Vaccarino]{
	Dipartimento di Scienze Matematiche\\
	Politecnico di Torino\\
	C.so Duca degli Abruzzi n.24,Torino,10129 \\
         Italy.}
\email{francesco.vaccarino@polito.it}
         
\address[Francesco~Vaccarino]{
        	ISI Foundation\\
	Via Alassio 11/c,Torino,10126\\
	Italy.}
\email{vaccarino@isi.it}

\footnotetext[1]{Partially supported by G\"oran Gustafsson Stiftelse and VR grant 2009-6102.}
\footnotetext[2]{Partially supported by the TOPDRIM project funded by the Future and Emerging Technologies program of the European Commission under Contract IST-318121.}

\begin{abstract}
A multifiltration is a functor indexed by $\N^r$ that maps any morphism to a monomorphism. 
The goal of this paper is to describe in  an explicit and combinatorial way the natural  $\N^r$-graded $R[x_1,\ldots, x_r]$-module structure on the homology of a multifiltration of simplicial complexes.
To do that we study multifiltrations of sets and vector spaces.  We prove  
in particular that the  $\N^r$-graded $R[x_1,\ldots, x_r]$-modules that can occur as  $R$-spans of multifiltrations of sets are  the direct sums of monomial ideals.
\end{abstract}

\maketitle

\section{Introduction}
Let $\N^r$ be the poset of $r$-tuples of natural numbers with partial order  given by $(v_1,\ldots,v_r)\leq (w_1,\ldots, w_r)$ if and only if $v_i\leq w_i$ for all $ 1\leq i\leq r$. 
A functor $F\colon\N^r\to \text{Spaces}$, with  values in the category of simplicial complexes, is called a {\bf multifiltration}  if, for any
$v\leq w$ in $\N^r$, the map  $F(v\leq w)\colon F(v)\to F(w)$ is a monomorphism.
Such a multifiltration is called {\bf compact} if $\text{colim}_{\N^r}F$ is a finite complex.
Compact multifiltrations are the main objects we are studying in this article. 
By applying homology with coefficients in a ring $R$ to  $F$  we obtain a functor 
$H_n(F,R)\colon \N^r\to  R\text{-Mod}$ with values in the category of $R$-modules.
 The category of   functors indexed by $\N^r$ with values in $R\text{-Mod}$ is equivalent to the category of
 $\N^r$-graded modules over the polynomial ring $R[x_1,\ldots, x_r]$. 
 The aim of this paper is to describe 
 this $R[x_1,\ldots, x_r]$-module structure on $H_n(F,R)$ in a way that is suitable for calculations.  
 One  very efficient way of doing it would be to give the minimal free presentation of 
 $H_n(F,R)$. This however we are unable to   do directly. Instead we are going to describe 
 two homomorphisms of finitely generated and free $\N^r$-graded $R[x_1,\ldots, x_r]$-modules
${\boldsymbol A}\rightarrow {\boldsymbol B}\rightarrow{\boldsymbol C}$ whose 
 composition  is the zero homomorphism (this sequence is a chain complex), and
$H_n(F,R)$ is isomorphic to  the homology  of this  complex.
 Since the modules  involved are finitely generated and free and the homomorphisms preserve grading, these  homomorphisms are simply
 given by matrices of elements in $R$. In our case  the  coefficients  of the matrices are either $1$, $-1$ or $0$
 and they can be explicitly expressed in terms of the multifiltration (we give a polynomial time procedure of how to do that in Section~\ref{sec funspaces}). One can then use standard computer algebra packages to study algebraic invariants of the module $H_n(F,R)$, in particular one can get its minimal free presentation  as well as a minimal resolution, the set of Betti numbers and the Hilbert function.
These invariants can be used then to study point clouds according to the theory of multidimensional persistence
(see for example~\cite{Multi, Multi2}).  Our procedure reduces the computation of $H_n(F,R)$ to the computation of the homology of a chain complex of free $\N^r$-graded $R[x_1,\ldots,x_r]$-modules.
This is the starting point in~\cite{Multi2} where the authors explain how to calculate this homology in polynomial time. One of our aims has been to show that such calculations can be done effectively 
for  arbitrary compact multifiltrations and not only  for the so called one critical which are
studied in~\cite{Multi2}.

The theory of multidimensional persistence is interesting both from an applied and theoretical point of view. From the applied perspective it is useful to construct algorithms that characterize and distinguish multifiltrations of data sets or networks according to topological features (see~\cite{Hepatic Lesions}). From a theoretical point of view, multidimensional persistence modules are $\N^r$-graded $R[x_1,\ldots,x_r]$-modules built from a multifiltration. It is interesting to study how the combinatorial structure of the multifiltration is reflected in the module structure and that is what we address in this paper. We start with discussing  the multifiltrations of sets in 
Section~\ref{sec funsets}. We recall  the structure of such multifiltrations,  how can they be 
decomposed into indecomposable parts and how to classify the indecomposable  pieces. 
We use it to give an algorithm for producing a free presentation of a multifiltration of sets. 
In Section~\ref{sec funmod} we then study the effect of taking the $R$-span functor on multifiltrations of sets.
We explain why the obtained multifiltrations of $R$-modules are rather special  and  prove that  they correspond to sums of monomial ideals. Since the $R$-span functor commutes with colimits,
 free presentations for multifiltrations of sets can be used to obtain free presentations
 of monomial ideals. These presentations are  used in Section~\ref{sec funspaces} to obtain the desired description of the $R[x_1,\ldots, x_r]$-module structure on $H_n(F,R)$.
 We conclude by pointing out, in Section~\ref{bifiltration}, that for multifiltrations indexed by $\N^2$ a presentation of the module $H_n(F,R)$, as the cokernel of a homogeneous homomorphism, is an easier task. In this case, the kernel of ${\boldsymbol B}\rightarrow{\boldsymbol C}$ is free and therefore, given our previous results, it is sufficient to choose a set of free generators of this kernel to find a presentation of $H_n(F,R)$. The problem of identifying such a set of free generators in an algorithmic and combinatorial way is left as an open question.

 \section{notation}
\begin{point}\label{point categories}
The symbols $\text{Sets}$, $\text{Spaces}$, and  $R\text{-Mod}$ denote the categories of respectively sets, simplicial complexes, and $R$-modules, where we always assume that $R$ is a commutative ring with identity.   The $R$-linear span functor which  assigns to a  set $S$ the free module $R(S)=\bigoplus_S R$ is denoted by $R\colon\text{Sets}\to R\text{-Mod}$. 
\end{point}

\begin{point}\label{point homology}
By definition a {\bf simplicial complex} $X$ is a collection of subsets of 
a setÊ $X_0$ (called the set of vertices of $X$) such that: for any $x$ inÊ $X_0$, $\{x\}\in X$ and if $\sigma\in X$
and $\tau\subset \sigma$, thenÊ $\tau\in X$.  An element $\sigma$ in $X$ is called a {\bf simplex} of dimension $|\sigma|-1$.  A complex is called {\bf finite} if $X_0$ is a finite set. A morphism between two simplicial complexes $f\colon X\to Y$ is by definition
a map of sets  $f\colon X_0\to Y_0$ such that $f(\sigma)$ is a simplex in $Y$ for any simplex $\sigma$ in $X$. A morphism $f\colon X\to Y$ is  a {\bf monomorphism} if and only if
the function $f\colon X_0\to Y_0$ is injective.

Let us choose an  order  $<$ on the set $X_0$. 
For $n\geq 0$, the symbol $X_n$  denotes the set of  strictly increasing sequences $x_0<\cdots <x_{n}$ of elements in $X_0$ for which the subset $\{x_0,\ldots, x_n\}\subset X_0$ is a simplex in $X$. Such a sequence is called an {\bf ordered} simplex of dimension $n$.  For $0\leq i\leq n+1$, by forgetting the $i$-th element in a  sequence
$x_0<\cdots <x_{n+1}$ we get an element in $X_n$.   The obtained map is denoted by
$d_i\colon X_{n+1}\to X_{n}$. By applying the $R$-span functor and taking the alternating sum of the induced maps we obtain:
\[RX_{n+1}\xrightarrow{\partial_{n+1}:=\sum_{i=0}^{n+1}(-1)^i d_i} RX_n
\xrightarrow{\partial_{n}:=\sum_{i=0}^{n}(-1)^i d_i} RX_{n-1}\]
where for  $n=0$, the $R$-module $RX_{-1}$ is taken to be  trivial. 
It is a standard fact that the composition $\partial_n\partial_{n+1}$ is the trivial map
and hence the image $\text{im}(\partial_{n+1})$ is a submodule of the kernel $\text{ker}(\partial_n)$. The quotient $\text{ker}(\partial_n)/\text{im}(\partial_{n+1})$ is called the $n$-th homology of $X$ and is denoted by $H_n(X,R)$.  The isomorphism type of this module  does not depend on the chosen ordering on $X_0$. Note that this is not a functor on the entire category of simplicial complexes.  However if $f\colon X\to Y$ is a monomorphism, then we can choose first an ordering on $Y_0$ and  then use it to induce
an ordering on $X_0$ so the function $f\colon X_0\to Y_0$ is order preserving. With these choices, by applying $f$ to ordered  sequences element-wise, we obtain   a map of sets $f_n\colon X_n\to Y_n$ which commutes with the maps $d_i$. In this way we  get an induced map of homology modules that we denote by $H_n(f,R)\colon H_n(X,R)\to  H_n(Y,R)$.
\end{point}

\begin{point}\label{point polynomials}
The symbol $R[x_1,\ldots ,x_r]$ denotes the $\N^r$-graded polynomial ring
with coefficients in a ring $R$.  The category of $\N^r$-graded $R[x_1,\ldots,x_r]$-modules with the degree preserving homomorphisms is denoted by $R[x_1,\ldots,x_r]\text{-\bf Mod}$ and we use bold face letters to denote such modules.

A monomial in $R[x_1,\ldots,x_r]$ is a polynomial
of the form $x_1^{v_1}\cdots x_r^{v_r}$. Its  grade is given by
$v=(v_1,\ldots ,v_r)$. Such a monomial is also  written as $x^v$. An $\N^r$-graded ideal in  $R[x_1,\ldots,x_r]$ is called {\bf monomial} if it is generated by 
monomials.  An $\N^r$-graded $R[x_1,\ldots,x_r]$-module isomorphic to  the ideal of  $R[x_1,\ldots,x_r]$ generated by a single monomial $x^v$ is called {\bf free on one generator} $v$ and denoted by $< x^v> $.
An $\N^r$-graded $R[x_1,\ldots,x_r]$-module which is isomorphic to a direct sum of free modules on one generator is called {\bf free}. The $R$-module $\text{Hom}(<x^v>,<x^w>)$ is either trivial if $v\not\geq w$, or is isomorphic to $R$ if $v\geq w$.  We use this to identify the $R$-module of homomorphisms between free modules $\text{Hom}(\oplus_{t\in T}<x^{v_t}>,\oplus_{s\in S}<x^{w_s}>)$
with the set of  $S\times T$ matrices of elements in $R$  whose  $(s,t)$  entry is $0$ if
$v_t\not\geq w_s$. Thus  to describe a degree preserving homomorphism between two finitely generated and free  $\N^r$-graded $R[x_1,\ldots,x_r]$-modules we need to specify:
\begin{itemize}
\item A  matrix $M$ of elements in $R$.
\item Two functions, one that assigns to every row of $M$ an element in $\N^r$ and the other
that assigns to every column  of $M$ an element in $\N^r$. The values of these functions are called  grades of the respective rows and columns. The grades of the columns correspond to the grades of the generators  of the domain  of the homomorphism and the grades of the rows correspond to the grades of the generators of the range of the homomorphism.
\item The matrix  $M$ should satisfy the following property: the entry corresponding to a row with grade $w$ and a column with grade $v$  is zero if $v\not\geq w$.
\end{itemize}
\end{point}
\begin{point} 
\label{point colimit}
Let $I$ be a small category. The symbol $\text{Fun}(I,\calc)$ denotes the category of functors indexed by $I$ with values in a category $\calc$ and natural transformations as morphisms.  We use the symbol $\text{Nat}_{\calc}(F,G)$ to denote the set of natural transformations between two functors $F,G\colon I\to \calc$. Recall~\cite{Categories} that the colimit of a functor  $F\colon  I\to \calc$
is an object  $\text{colim}_{I}F$ in $\calc$ together with morphisms $p_i\colon F(i)\to \text{colim}_{I}F$, for any object $i$ in  $I$. These morphisms are required to satisfy the following universal property.
First, for any
 $\alpha\colon i\to j$ in $I$,  $ p_jF(\alpha\colon i\to j)=p_i$. Second,
if $q_i\colon F(i)\to X$  is a sequence of morphisms in $\calc$ indexed by objects of $I$
fulfilling the equality $ q_jF(\alpha\colon i\to j)=q_i$ for any morphism $\alpha$ in $I$, then there is  a unique $f\colon \text{colim}_{ I}F\to X$ such that $q_i= fp_i$ for any object $i$ in $I$.

If $I$ is the empty category, $\text{colim}_{I}F$ is called the {\bf initial} object and denoted by $\emptyset$. The initial object has the property that, for any object $X$ in $\calc$,
the set of morphisms $\text{mor}_C(\emptyset, X)$ has exactly  one element.
If $I$ is a discrete category, then $\text{colim}_{I}F$ is called the {\bf coproduct} and denoted either by $\coprod_{i\in I}F(i)$ or $\bigoplus_{i\in I}F(i)$. The second notation is used only  in the case the coproduct is taken in an additive or abelian category, as for example in $R\text{-Mod}$.
\end{point}

\begin{point}
\label{point decompose}
An object $X$ Êin $\calc$ is called {\bf decomposable} if it is isomorphic to a sum  $X_1\coprod X_2$ where neither $X_1$ nor $X_2$ is  the initial object.
It is {\bf indecomposable} if it is neither  initial nor decomposable.
An object $X$ is  called {\bf uniquely decomposable} if the following two conditions hold.
First, it is isomorphic to a coproduct $\coprod_{i\in I}X_i$ where  $X_i$ is indecomposable for any $i$. Second, if $X$ is isomorphic to $\coprod_{i\in I}X_i$ and
to $\coprod_{j\in J}Y_j$, where $X_i$'s and $Y_j$'s are indecomposable, then there is a bijection $\phi\colon I\to J$ such that
$X_i$ and $Y_{\phi(i)}$ are isomorphic for any $i$ in $I$.

In the category of sets the initial object is the empty set, the coproduct is the disjoint union, 
a set is decomposable if it contains at least two elements, and is indecomposable if it contains exactly one element.
For  $F\colon I\to \text{Sets}$, its colimit is the quotient of $\coprod_{i \in I} F(i)$
by the equivalence relation generated by $x_i$ in $F(i)$ is related to $x_j$ in $F(j)$
if there are morphisms $\alpha\colon i\to k$ and $\beta \colon j\to k$ in $I$
for which $F(\alpha)(x_i)=F(\beta)(x_j)$.

\end{point}
\begin{point}\label{point posetN}
The symbol $\N^r$ denotes  the poset of $r$-tuples of natural numbers with partial order  given by $(v_1,\ldots,v_r)\leq (w_1,\ldots, w_r)$ if and only if $v_i\leq w_i$ for all $ 1\leq i\leq r$. The initial element
$(0,\ldots,0)$ in $\N^r$ is denoted simply by $0$.
 Recall that the partial order on $\N^r$  is a lattice. This means that for any finite set of elements
$S$ in $\N^r$, there are elements $\text{min}(S)$ and $\text{max}(S)$ in  $\N^r$ 
(not necessarily in $S$)
with the following properties. First, for any $v$ in $S$, $\text{min}(S)\leq v\leq \text{max}(S)$. Second, if $u$ and $w$ are elements in $\N^r$ for which $u\leq v\leq w$, for any $ v$ in $S$, then $u\leq \text{min}(S)$ and $\text{max}(S)\leq w$.
Furthermore any non-empty subset $S$ of  $\N^r$ has an element $v$ such that  if $w<v$, then $w$ is not in $S$.  Such elements are called minimal in $S$ and may not be unique.
A  functor $F$ indexed by the poset $\N^r$ that maps any morphism to a monomorphism
is called a {\bf multifiltration}. We will denote the colimit of a functor $F$ indexed by $\N^r$ by $\text{colim}\, F.$
A multifiltration $F\colon\N^r\to \text{Sets}/R\text{-Mod}$ is called {\bf one critical} if for any element
$x$ in $\text{colim}\, F$, the set $\{v\in \N^r\ |\ x\text{ is in the image of }p_v\colon F(v)\to \text{colim}\, F\}$ has a unique minimal element  which we denote by $v_x$ and call the {\bf critical coordinate} of $x$
(see~\cite{Multi2}).
A functor $F\colon\N^r\to \text{Sets}/\text{Spaces}$ is called {\bf compact} if $\text{colim}\, F$ is a finite set/simplicial complex.
\end{point}
\begin{point}\label{point functors}
Let $v$ be an element in $\N^r$. The functor $\text{mor}_{\N^r}(v,-)\colon \N^r\to\text{Sets}$ is called
{\bf free on one generator}.  For example $\text{mor}_{\N^r}(0,-)\colon \N^r\to\text{Sets}$  is the constant functor with value the one point set. Since  $\N^r$ is a poset, the values  of a free functor on one generator are either empty, or the one point set. A functor $F\colon  \N^r\to\text{Sets}$ is called {\bf free} if it is isomorphic to a disjoin  union of free functors on one generator. Note that any free functor is a multifiltration.

Composition with the $R$-span functor $R\colon\text{Sets}\to R\text{-Mod}$, is denoted by the same symbol 
 $R\colon \text{Fun}(\N^r,\text{Sets})\to \text{Fun}(\N^r,R\text{-Mod})$ and called by the same name the $R$-span functor. Recall that this $R$-span functor is the left adjoint to the forget the $R$-module structure functor. This implies that the $R$-span functor commutes with colimits, in particular it maps the initial object to the initial object and commutes with coproducts.

 The functor  $R\text{mor}_{\N^r}(v,-)\colon \N^r\to R\text{-Mod}$ is also called  {\bf free on one generator}.
 A functor $F\colon  \N^r\to R\text{-Mod}$ is called {\bf free} if it is isomorphic to the $R$-span of a free functor
 with values in $\text{Sets}$ or equivalent, if it is isomorphic to a direct sum of free functors on one generator.
\end{point}

\begin{point}\label{point identfunRmod}
Recall that the category of functors  $\text{Fun}(\N^r,R\text{-Mod})$ is equivalent to the category of $\N^r$-graded modules  $R[x_1,\ldots,x_r]\text{-\bf Mod}$. We are going to identify these categories using the following 
explicit equivalence which assigns to a
functor $F\colon \N^r\to R\text{-Mod}$, the $\N^r$-graded $R[x_1,\ldots,x_r]$-module given by  
${\boldsymbol F}:=\oplus_{v\in \N^r}F(v)$, where $x_i$ acts on component $F(v)$ via the map
$F(v\leq v+e_i)$ where $e_i$ is the $i$-th vector in the standard base.
Via this identification, the free functor $R\text{mor}_{\N^r}(v,-)\colon \N^r\to R\text{-Mod}$ is mapped to the
free module $<x^v>$.
\end{point}

\section{Functors with values in Sets}\label{sec funsets}
The aim of this section is to prove several  basic properties of functors of the form
$F\colon  \N^r\to \text{Sets}$. Many of these properties are well known. We start with:
\begin{prop}\label{prop indecomp}
A functor $F\colon  \N^r\to \text{\rm Sets}$ is indecomposable (see~\ref{point decompose}) if and only if 
the set $\text{\rm colim}\, F$   contains exactly one element.
\end{prop}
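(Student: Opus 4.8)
My plan is to translate coproduct decompositions of $F$ in the functor category $\text{Fun}(\N^r,\text{Sets})$ into partitions of the set $\text{colim}\, F$, and conversely. Two preliminary observations drive both implications. First, coproducts in the functor category are computed objectwise, $(F_1\coprod F_2)(v)=F_1(v)\coprod F_2(v)$, and the initial object is the constantly empty functor (\ref{point decompose}); since $\text{colim}\, F$ is a quotient of $\coprod_v F(v)$, and a quotient of a nonempty set is nonempty, a functor $F$ is initial if and only if $\text{colim}\, F=\emptyset$. Second, the colimit is a left adjoint and hence preserves coproducts, giving a natural bijection $\text{colim}(F_1\coprod F_2)\cong\text{colim}\, F_1\coprod\text{colim}\, F_2$.

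The implication that a singleton colimit forces $F$ to be indecomposable is then immediate: $F$ is not initial because its colimit is nonempty, and if $F\cong F_1\coprod F_2$ with both summands non-initial then, by the two observations, $\text{colim}\, F\cong\text{colim}\, F_1\coprod\text{colim}\, F_2$ would contain at least two elements, a contradiction.

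For the converse I would argue by contraposition, assuming $\text{colim}\, F$ is not a singleton and then either showing that $F$ is initial or producing a nontrivial decomposition. If $\text{colim}\, F=\emptyset$ then $F$ is initial by the first observation, hence not indecomposable. If instead $\text{colim}\, F$ has at least two elements, I would choose a partition $\text{colim}\, F=C_1\sqcup C_2$ into two nonempty sets and set $F_i(v):=p_v^{-1}(C_i)$, where $p_v\colon F(v)\to\text{colim}\, F$ is the structure map of the colimit. The key point is that these are genuine subfunctors: the cocone relations $p_w\, F(v\leq w)=p_v$ guarantee that $F(v\leq w)$ carries $p_v^{-1}(C_i)$ into $p_w^{-1}(C_i)$, so each $F_i$ is a functor and objectwise $F(v)=F_1(v)\sqcup F_2(v)$, that is, $F\cong F_1\coprod F_2$. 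Finally each $F_i$ is non-initial: any $c\in C_i$, being an element of the colimit, is the image under some $p_v$ of a point of $F(v)$, which then lies in $F_i(v)$. This exhibits $F$ as decomposable, completing the contrapositive.

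The main obstacle is the construction in the previous paragraph: one must verify that the objectwise preimages $p_v^{-1}(C_i)$ assemble into a subfunctor and that together they recover $F$, which is exactly where the compatibility $p_w\, F(v\leq w)=p_v$ of the colimit cocone is used; everything else is formal bookkeeping with coproducts and initial objects. I would also remark that this argument never invokes injectivity of the maps $F(v\leq w)$, so the statement in fact holds for arbitrary functors $\N^r\to\text{Sets}$ and not only for multifiltrations.
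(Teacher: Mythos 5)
Your proof is correct and takes essentially the same route as the paper: both directions rest on colimits commuting with coproducts, and your construction $F_i(v)=p_v^{-1}(C_i)$ is exactly the paper's decomposition $F=\coprod_{x\in\text{colim}\,F}F[x]$, $F[x](v)=p_v^{-1}(\{x\})$, merely coarsened to a two-block partition of $\text{colim}\,F$. The only difference is cosmetic: the paper uses the full point-fiber decomposition (which it then reuses for the unique-decomposability corollary), whereas your contrapositive only needs two blocks.
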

\begin{proof}
If the values of $F$  are not all empty, then $\text{colim}\, F$ is not empty.
Further more if $F=G\coprod H$, then $\text{colim}\, F=(\text{colim}\, G)\coprod (\text{colim}\, H)$. This shows that if $\text{\rm colim}\, F$  contains exactly one point, then $F$ is indecomposable. On the other hand  we can decompose $F$ as 
$\coprod_{x\in \text{colim}\, F} F[x]$ where,  for any point $x$ in $\text{colim}\, F$, $F[x]\colon \N^r\to \text{Sets}$
is the  subfunctor of $F$  whose values are given by
$F[x](v):=\{y\in F(v)\ |\ p_v(y)=x\}$ (see~\ref{point colimit}).   Observe that not all  the  values of $F[x]$
are empty.   This describes $F$ as a coproduct of indecomposable functors. Thus if $F$ is indecomposable, then $\text{\rm colim}\,  F$ has to contain only one element.
\end{proof}
The argument in the above proof shows  more:
\begin{cor}\label{unique decomposition}
Any functor  $F\colon  \N^r\to \text{\rm Sets}$ is uniquely decomposable as
$F=\coprod_{x\in\text{\rm colim}\, F}F[x]$.
\end{cor}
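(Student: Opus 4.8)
The plan is to verify directly the two conditions of unique decomposability from~\ref{point decompose}, reusing the construction already present in the proof of Proposition~\ref{prop indecomp}.

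For the existence condition, I would observe that the proof of Proposition~\ref{prop indecomp} already produces the coproduct decomposition $F=\coprod_{x\in\text{colim}\, F}F[x]$, where $F[x](v)=\{y\in F(v)\mid p_v(y)=x\}$. It then remains to note that each summand $F[x]$ is indecomposable: its colimit is the single point $x$, so Proposition~\ref{prop indecomp} applies. Thus $F$ is a coproduct of indecomposable functors, which is exactly the first condition.

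For uniqueness, suppose $F$ is also isomorphic to $\coprod_{i\in I}X_i$ with every $X_i$ indecomposable. Since colimits commute with coproducts (this was recorded in the proof of Proposition~\ref{prop indecomp} for binary coproducts and extends to arbitrary ones), we obtain $\text{colim}\, F\cong\coprod_{i\in I}\text{colim}\, X_i$. By Proposition~\ref{prop indecomp} each $\text{colim}\, X_i$ is a single point $x_i$, so $i\mapsto x_i$ is a bijection $I\to\text{colim}\, F$. I would then identify $X_i$, regarded as a subfunctor of $F$ via the isomorphism, with $F[x_i]$: every element of $X_i(v)$ maps under $p_v$ to $x_i$, giving $X_i(v)\subseteq F[x_i](v)$; conversely any $y\in F[x_i](v)$ lies in a unique summand $X_j(v)$, and $p_v(y)=x_i$ forces $x_j=x_i$, hence $j=i$ and $y\in X_i(v)$. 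Therefore $X_i=F[x_i]$. Running the same argument on a second indecomposable decomposition $\coprod_{j\in J}Y_j$ and composing the resulting bijections $I\cong\text{colim}\, F\cong J$ yields a bijection $\phi\colon I\to J$ under which corresponding summands are both equal to the same $F[x]$, so in particular $X_i\cong Y_{\phi(i)}$.

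The step doing the real work is the passage $\text{colim}\,\coprod_{i}X_i\cong\coprod_{i}\text{colim}\, X_i$ together with Proposition~\ref{prop indecomp}: this is what turns the abstract combinatorics of summands into a concrete indexing by the points of $\text{colim}\, F$, and pins every indecomposable summand down to one of the canonical pieces $F[x]$. Everything else is bookkeeping with the universal property of the colimit, so I do not expect any genuine obstacle beyond making this identification precise.
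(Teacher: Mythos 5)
Your proposal is correct and takes essentially the same route as the paper: the paper proves this corollary merely by remarking that the argument in the proof of Proposition~\ref{prop indecomp} ``shows more'', i.e.\ the fiber decomposition $F=\coprod_{x\in\text{colim}\,F}F[x]$ gives existence, while the fact that colimits carry coproducts to coproducts, combined with Proposition~\ref{prop indecomp}, forces any indecomposable decomposition to be indexed by the points of $\text{colim}\,F$ with summands equal to the $F[x]$. Your write-up simply makes explicit the uniqueness bookkeeping that the paper leaves implicit.
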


In this paper we are not interested in all functors indexed by $\N^r$ with values in Sets, but those that map any morphism to a monomorphism. Such functors are called 
multifiltrations of sets (see~\ref{point posetN}) and here is their characterization:
\begin{prop}\label{prop multimono}
A functor $F\colon  \N^r\to \text{\rm Sets}$ is a multifiltration if and only if the map 
$p_v\colon F(v)\to \text{\rm colim}\,  F$ is a  monomorphism for any $v$ in $\N^r$.
\end{prop}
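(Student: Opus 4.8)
The plan is to prove the two implications separately, using in both the factorization $p_w \circ F(v \leq w) = p_v$ supplied by the universal property of the colimit (see~\ref{point colimit}) together with the explicit description of $\text{colim}\, F$ for functors into Sets (see~\ref{point decompose}). The reverse implication is the easy one. Assuming each $p_v$ is injective, I would take $v \leq w$ and $a, b \in F(v)$ with $F(v \leq w)(a) = F(v \leq w)(b)$, apply $p_w$, and invoke $p_w \circ F(v \leq w) = p_v$ to obtain $p_v(a) = p_v(b)$; injectivity of $p_v$ then gives $a = b$, so $F(v \leq w)$ is a monomorphism and $F$ is a multifiltration.

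The substantial direction is the converse. Here I would assume every structure map $F(v \leq w)$ is injective, fix $a, b \in F(v)$ with $p_v(a) = p_v(b)$, and aim to conclude $a = b$. By~\ref{point decompose}, the equality $p_v(a) = p_v(b)$ says that $a$ and $b$ are identified by the equivalence relation generated by the elementary relation: $x_i \in F(i)$ and $x_j \in F(j)$ are related when there is some $k$ with $i, j \leq k$ and $F(i \leq k)(x_i) = F(j \leq k)(x_j)$ (in the poset $\N^r$ the only morphisms are the comparisons, so the common target is just any upper bound of $i$ and $j$).

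The crux of the argument, and the step I expect to be the main obstacle, is to show that because $\N^r$ is a lattice (see~\ref{point posetN}) and hence directed, this elementary relation is already transitive; then the generated equivalence relation coincides with it and no genuine zig-zag chains can occur. Concretely, if $x_i$ is related to $x_j$ via some $k_1$ and $x_j$ is related to $x_l$ via some $k_2$, I would set $m := \text{max}(k_1, k_2)$ and push all three elements forward into $F(m)$; functoriality then forces $F(i \leq m)(x_i) = F(j \leq m)(x_j) = F(l \leq m)(x_l)$, which exhibits $x_i$ as related to $x_l$. I want to stress that this transitivity argument uses only directedness of $\N^r$, not injectivity of the structure maps.

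Once transitivity is in hand, the conclusion is immediate: the relation $p_v(a) = p_v(b)$ reduces to the existence of a single $k \geq v$ with $F(v \leq k)(a) = F(v \leq k)(b)$, and since $F$ is a multifiltration the map $F(v \leq k)$ is injective, forcing $a = b$. Hence $p_v$ is a monomorphism for every $v$. The only delicate point is the bookkeeping needed to verify that the elementary relation is transitive; the remainder follows formally from the universal property and the lattice structure of $\N^r$.
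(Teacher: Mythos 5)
Your proof is correct and follows essentially the same route as the paper: both hinge on the observation that, because $\N^r$ is a lattice (hence directed), the elementary relation generating the colimit equivalence relation is already transitive, so two elements of $F(v)$ have the same image in $\text{colim}\, F$ if and only if some $F(v\leq u)$ identifies them. The only difference is one of detail --- the paper merely asserts this fact and says ``the proposition follows,'' whereas you spell out the transitivity check via $m=\text{max}(k_1,k_2)$ and both implications explicitly.
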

\begin{proof}
Recall that $\text{colim}\, F$ is the quotient of $\coprod_{v\in \N^r} F(v)$ by the
equivalence relation generated by $x_v$ in $F(v)$  is related to $x_w$ in $F(w)$,
if there is $u\geq v$ and $u\geq w$ such that $F(v\leq u)(x_v)=F(w\leq u)(x_w)$.
Note that since $\N^r$  is a lattice, the described relation is  already en equivalence relation. Thus  two elements of $F(v)$ are mapped to the same element in
$\text{colim}\, F$ if and only if they are mapped to the same element via $F(v\leq u)$ for some $u$ and the proposition follows. 
\end{proof}

\begin{cor}\label{cor charindmult}
A functor $F\colon \N^r\to \text{\rm Sets}$ is an indecomposable multifiltration if and only if
the set $F(v)$ has at most one element for any $v$ in $\N^r$ and there is $u$ for which
$F(u)$ is not empty.
\end{cor}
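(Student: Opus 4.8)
The plan is to deduce this corollary by combining the two preceding propositions, Proposition~\ref{prop indecomp} and Proposition~\ref{prop multimono}, so that almost no new work with colimits is needed. The key observation is that both being a multifiltration and being indecomposable have just been characterized in terms of the map $p_v\colon F(v)\to\text{colim}\, F$ and the size of $\text{colim}\, F$, and once these are available the statement is essentially a counting argument about maps into a one-element set.

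For the forward implication, I would assume $F$ is an indecomposable multifiltration. By Proposition~\ref{prop indecomp} the set $\text{colim}\, F$ has exactly one element; in particular it is nonempty, which forces $F(u)$ to be nonempty for at least one $u$, since every point of the colimit is $p_u(y)$ for some $y\in F(u)$. By Proposition~\ref{prop multimono} each $p_v\colon F(v)\to\text{colim}\, F$ is a monomorphism, i.e.\ an injection of sets; but an injection whose codomain has a single element can have a domain of at most one element, so $F(v)$ has at most one element for every $v$. This yields both required conditions.

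For the converse, I would assume $F(v)$ has at most one element for all $v$ and that $F(u)\neq\emptyset$ for some $u$. First I would check that $F$ is a multifiltration: each structure map $F(v\leq w)\colon F(v)\to F(w)$ has domain with at most one element, and any map of sets out of a set with at most one element is automatically injective, hence a monomorphism. It then remains to show $F$ is indecomposable, for which by Proposition~\ref{prop indecomp} it suffices to prove that $\text{colim}\, F$ has exactly one element. It is nonempty because $F(u)\neq\emptyset$. To see it has at most one element, I would take any two points, write them as $p_v(a)$ and $p_w(b)$ with $a\in F(v)$ and $b\in F(w)$, and use that $\N^r$ is a lattice (see~\ref{point posetN}): setting $t=\max(v,w)$, the element $F(t)$ has at most one point, so $F(v\leq t)(a)$ and $F(w\leq t)(b)$ coincide. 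The compatibility $p_t F(v\leq t)=p_v$ and $p_t F(w\leq t)=p_w$ then gives $p_v(a)=p_w(b)$, so the two points agree.

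The argument is short and I expect no serious obstacle; the only place requiring care is the collapsing of the colimit to a single point in the converse, where one must invoke the lattice (directedness) structure of $\N^r$ to find a common target $t\geq v,w$ and use that $F(t)$ has at most one element. Everything else reduces to the trivial facts that a map out of a set with at most one element is injective and that an injection into a singleton has a domain of size at most one.
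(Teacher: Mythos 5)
Your proof is correct and follows essentially the same route as the paper: the forward direction combines Propositions~\ref{prop indecomp} and~\ref{prop multimono} exactly as the paper does, and your converse is just the direct (rather than contrapositive) phrasing of the paper's argument, using $\max\{v,w\}$ and the compatibility $p_t F(v\leq t)=p_v$ to collapse the colimit to a point. The only addition is your explicit remark that a functor whose values have at most one element is automatically a multifiltration, a triviality the paper leaves unstated.
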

\begin{proof}
Assume first  $F$ is an indecomposable multifiltration. By Proposition~\ref{prop indecomp}, $\text{colim}\,F$  is the one point set. The multifiltration assumption implies that $F(v)$ is a subset of $\text{colim}\,F$ for any $v$ (see~\ref{prop multimono}). Consequently the set $F(v)$  can not contain more than one element. Since $\text{colim}\,F$ is not empty, the values of $F$ can not be all empty either. This shows one implication.

Recall that any element in $\text{colim}\,F$ is of the form $p_v(x)$ for some
$v$ in  $\N^r$ and $x$ in $F(v)$.  Assume  that 
 $\text{colim}\,F$ has at least two  elements, which we write as $p_v(x)$ and $p_w(y)$.  The elements 
 $F(v\leq\text{max}\{v,w\})(x)$ and  $F(w\leq\text{max}\{v,w\})(y)$ therefore also have to be different. Consequently  the set $F(\text{max}\{v,w\})$ has more than one element.  \end{proof}

Indecomposable multifiltrations of sets are therefore exactly the non empty sub-functors of the
free  functor $\text{mor}_{\N^r}(0,-)$ on one generator given by the origin $0$ in $\N^r$ (see~\ref{point functors}).

 Note that since there is a unique map from any set to the one point set, according to~\ref{cor charindmult} , if $F\colon \N^r\to \text{Sets}$ is an indecomposable
multifiltration, then, for any $G\colon \N^r\to \text{Sets}$, there is at most one natural transformation $G\to F$.  Thus the  full subcategory   of $\text{Fun}(\N^r,\text{Sets})$ given by the indecomposable multifiltrations is a poset. 
This is the inclusion poset of all the non empty  sub-functors of the
free  functor $\text{mor}_{\N^r}(0,-)$.
Our next goal is to describe this poset.
We do that using the notion of the {\bf support}
of a functor $F\colon \N^r\to \text{Sets}$:
\[\text{supp}(F):=\{v\in \N^r\ |\ F(v)\not=\emptyset\}\]
For example $\text{supp}(\text{mor}_{\N^r}(v,-))=\{w\in\N^r\ |\ v\leq w\}$.
Not all subsets of $\N^r$ can be a support.  If $v$ belongs to $\text{supp}(F)$,
then so does any $w\geq v$. Subsets of $\N^r$ that satisfy this property are called
{\bf saturated}.

\begin{prop}\label{prop indmultfilt}
The function $(F\colon\N^r\to\text{\rm Sets})\mapsto \text{\rm supp}(F)$ is an isomorphism between
the poset of  indecomposable multifiltrations of sets  and the inclusion poset of saturated non-empty subsets of $\N^r$.
\end{prop}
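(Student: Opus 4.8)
The plan is to exhibit the assignment $F \mapsto \text{supp}(F)$ as a poset isomorphism by constructing an explicit inverse and checking that both the map and its inverse preserve and reflect the order relation. First I would verify the map is well defined: given an indecomposable multifiltration $F$, Corollary~\ref{cor charindmult} tells us each $F(v)$ has at most one element and some $F(u)$ is non-empty, so $\text{supp}(F)$ is non-empty; saturation follows because $F(v\leq w)\colon F(v)\to F(w)$ carries the unique element of $F(v)$ into $F(w)$, forcing $F(w)\neq\emptyset$ whenever $F(v)\neq\emptyset$ and $v\leq w$. Thus $\text{supp}(F)$ lands in the poset of saturated non-empty subsets.

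Next I would construct the inverse. Given a saturated non-empty subset $S\subseteq\N^r$, define a functor $F_S\colon\N^r\to\text{Sets}$ by $F_S(v)=\{v\}$ (a chosen one-point set) if $v\in S$ and $F_S(v)=\emptyset$ otherwise, with $F_S(v\leq w)$ the unique available map; saturation of $S$ guarantees that whenever $v\in S$ and $v\leq w$ we have $w\in S$, so this map has somewhere to go and $F_S$ is genuinely a functor. Every such map is injective (a map out of a one-point or empty set), so $F_S$ is a multifiltration, and since all its non-empty values are singletons, Corollary~\ref{cor charindmult} shows it is indecomposable. Clearly $\text{supp}(F_S)=S$, giving one composite identity. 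For the other composite, if $F$ is an indecomposable multifiltration then $F$ and $F_{\text{supp}(F)}$ agree value-by-value up to the canonical identification of one-point sets, and I would assemble these bijections into a natural isomorphism using the observation, noted just before the statement, that there is \emph{at most one} natural transformation into an indecomposable multifiltration, which makes naturality automatic.

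Finally I would check that both directions are order preserving. A morphism $F\to G$ of indecomposable multifiltrations exists exactly when $F$ is a subfunctor of $G$, which in turn holds iff $F(v)\neq\emptyset \Rightarrow G(v)\neq\emptyset$, i.e. iff $\text{supp}(F)\subseteq\text{supp}(G)$; conversely an inclusion $S\subseteq T$ of saturated sets induces the evident natural transformation $F_S\to F_T$. This shows the bijection is a map of posets in both directions, completing the isomorphism.

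I expect the only genuine subtlety to be the naturality of the isomorphism $F\cong F_{\text{supp}(F)}$ and the matching of morphisms with inclusions: because the values are singletons the component maps are forced, so one must be slightly careful to invoke the uniqueness-of-natural-transformation remark rather than to construct commuting squares by hand. Everything else is bookkeeping, so the main conceptual content is precisely the identification, already prepared by Corollary~\ref{cor charindmult}, of indecomposable multifiltrations with non-empty subfunctors of $\text{mor}_{\N^r}(0,-)$.
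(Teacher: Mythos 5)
Your proposal is correct and follows essentially the same route as the paper: you construct the same inverse (the paper's $\Psi(S)$, your $F_S$), invoke Corollary~\ref{cor charindmult} to see it is an indecomposable multifiltration, and verify the two composites, handling naturality of $F\cong F_{\text{supp}(F)}$ exactly as the paper does via the uniqueness of natural transformations into an indecomposable multifiltration. The only cosmetic difference is that you spell out the well-definedness (saturation and non-emptiness of the support) and both directions of order preservation explicitly, which the paper leaves largely implicit.
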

\begin{proof}
Observe first that if there is a natural transformation $F\to G$, then if $F(v)$ is not empty, then neither is $G(v)$. This means that $\text{supp}(F)\subset\text{supp}(G)$ which shows that the  function
$F\mapsto \text{supp}(F)$  is a function of posets.

To define the inverse of the support function, choose a saturated subset $S$ in $\N^r$ and  an element $v$  in $\N^r$. Set: 
\[\Psi(S)(v):=\begin{cases}
\{v\} &\text{ if } v\in S\\
\emptyset & \text{ if } v\not\in S
\end{cases}
\]
Since $S$ is saturated, if $\Psi(S)(v)$ is not empty, then neither is
$\Psi(S)(w)$ for any $v\leq w$. We can therefore define $\Psi(S)(v\leq w)\colon \Psi(S)(v)\to \Psi(S)(w)$
to be the unique map. This defines a functor which by Corollary~\ref{cor charindmult} is an indecomposable multifiltration. 
The construction $\Psi$ gives a map of posets between the saturated subsets in 
 $\N^r$ and indecomposable multifiltrations.
 
 Note that
 $\text{supp}(\Psi(S))=S$. 
 Furthermore, for any $F\colon \N^r\to \text{Sets}$, there is a unique
 natural transformation $F\to\Psi(\text{supp}(F))$ which becomes an isomorphism if
 $F$ is an indecomposable multifiltration. This shows that $\Psi$ is the inverse of
 the support function.
\end{proof}

Our next step is to  describe the set of saturated subsets of  $\N^r$.
For any subset $S$ of $\N^r$ define $\text{gen}(S):=\{v\in S\ |\ \text{ if } w<v,\text{ then } w\not\in S\}$ and call it the {\bf  minimal set of generators} of $S$. For example $\text{gen}(\text{supp}(\text{mor}_{\N^r}(v,-)))=\{v\}$.  Furthermore~\ref{prop indmultfilt} implies that an indecomposable multifiltration $F\colon\N^r\to\text{Sets}$ is free (necesarily on one generator) if and only if  $\text{gen}(\text{supp}(F))$ consists of one element. This can be generalised to arbitrary multifiltrations:
\begin{prop}\label{prop coneset}
A multifiltration $F\colon\N^r\to\text{Sets}$ is free if and only if it is one critical (see~\ref{point posetN}).
\end{prop}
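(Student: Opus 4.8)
The plan is to reduce everything to the indecomposable pieces supplied by Corollary~\ref{unique decomposition} and then match the two conditions termwise against the description of indecomposable free functors given just before the statement.

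First I would invoke Corollary~\ref{unique decomposition} to write $F=\coprod_{x\in\text{colim}\,F}F[x]$ as its unique decomposition into indecomposables, where $F[x](v)=\{y\in F(v)\mid p_v(y)=x\}$. Since $F$ is a multifiltration, each $p_v$ is a monomorphism by Proposition~\ref{prop multimono}, so every $F[x]$ takes values in sets with at most one element and is an indecomposable multifiltration by Corollary~\ref{cor charindmult}. The key observation is that the support of $F[x]$ is exactly the set $\{v\in\N^r\mid x\in\text{im}(p_v)\}$ occurring in the definition of one critical.

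Next I would reconcile the notion of freeness with this decomposition. A free functor on one generator is indecomposable (by Proposition~\ref{prop indecomp}, since its colimit is a single point), so if $F$ is free then its free-on-one-generator summands are indecomposable and, by the uniqueness in Corollary~\ref{unique decomposition}, must coincide up to isomorphism with the $F[x]$; conversely, if each $F[x]$ is free on one generator, then $F=\coprod_x F[x]$ is by definition free. Hence $F$ is free if and only if every $F[x]$ is free on one generator.

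Finally I would apply the remark preceding the statement: an indecomposable multifiltration $F[x]$ is free on one generator precisely when $\text{gen}(\text{supp}(F[x]))$ consists of a single element. Since the elements of $\text{gen}(\text{supp}(F[x]))$ are exactly the minimal elements of the saturated set $\text{supp}(F[x])=\{v\mid x\in\text{im}(p_v)\}$, this single-generator condition is literally the requirement that this set have a unique minimal element, i.e. that $x$ admit a well-defined critical coordinate $v_x$. Running this equivalence over all $x\in\text{colim}\,F$ shows that $F$ is free if and only if it is one critical. The only point requiring genuine care is the third paragraph, where I must use the uniqueness of the indecomposable decomposition to guarantee that an arbitrary presentation of $F$ as a disjoint union of free functors on one generator really does split along the canonical pieces $F[x]$, rather than grouping them in some other way.
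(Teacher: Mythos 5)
Your proof is correct and follows essentially the same route as the paper's: decompose $F=\coprod_{x\in\mathrm{colim}\,F}F[x]$, identify $\mathrm{supp}(F[x])$ with the set appearing in the definition of one critical, and invoke the fact that an indecomposable multifiltration is free precisely when $\mathrm{gen}(\mathrm{supp}(F[x]))$ is a singleton. The only difference is that you spell out, via the uniqueness in Corollary~\ref{unique decomposition}, why an arbitrary free decomposition of $F$ must align with the canonical pieces $F[x]$ --- a step the paper leaves implicit.
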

\begin{proof}
We have a decomposition $F=\coprod_{x\in\text{\rm colim}\, F}F[x]$. Note that
$\text{supp}(F[x])=\{v\in \N^r\ |\ x\text{ is in the image of } p_v\colon F(v)\to \text{\rm colim}\, F\}$.
Thus by definition,  $F$  is one critical if and only if  $\text{gen}(\text{supp}(F[x]))$  
 are one element sets, i.e., if the functors $F[x]$ are free  on one generator, for every $x$ in $\text{colim}\,F$. \end{proof}

Directly from the definition  of the minimal set of generators it follows that:  (1) elements in $\text{\rm gen}(S)$ are not comparable; (2) any element in $S$Ê is comparable to some element in $\text{\rm gen}(S)$.  This first property implies  $\text{gen}(S)$ is   finite, since:

\begin{lemma}\label{lem propgen}
If  $S$ is an infinite subset in $\N^r$, then it contains an infinite chain, i.e., a sequence of the form $v_1<v_2<\cdots$. 
\end{lemma}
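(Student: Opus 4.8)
The plan is to reduce the lemma to the auxiliary claim that \emph{every sequence $v_1,v_2,\ldots$ of elements of $\N^r$ has an infinite non-decreasing subsequence} $v_{n_1}\leq v_{n_2}\leq\cdots$, where $\leq$ is the partial order of~\ref{point posetN}. Granting this, the lemma follows at once: since $S$ is infinite I may enumerate \emph{distinct} elements $v_1,v_2,\ldots$ of $S$, extract a non-decreasing subsequence $v_{n_1}\leq v_{n_2}\leq\cdots$, and observe that two distinct comparable elements are automatically strictly comparable, so that $v_{n_1}<v_{n_2}<\cdots$ is the desired infinite chain.

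It remains to prove the auxiliary claim, which I would do by induction on $r$. For $r=1$ the partial order is the usual total order on $\N$, which is a well-order; hence I can build a non-decreasing subsequence greedily, repeatedly selecting the minimum of the (always infinite) set of not-yet-used terms, the selected values being automatically non-decreasing. For the inductive step, write each $v_n=(v_n',c_n)$ with $v_n'\in\N^{r-1}$ recording the first $r-1$ coordinates and $c_n\in\N$ the last. By the inductive hypothesis applied to $(v_n')_n$ there is an infinite subsequence along which the $\N^{r-1}$-parts are non-decreasing; applying the $r=1$ case to the last coordinates of \emph{this} subsequence yields a further infinite subsequence along which the $c$'s are non-decreasing as well. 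Along the resulting indices both the first $r-1$ coordinates and the last coordinate are non-decreasing, so the corresponding $v_n$ form a non-decreasing subsequence in $\N^r$.

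I expect the only genuine subtlety to be the passage from weak to strict monotonicity: the well-ordering and pigeonhole arguments naturally produce merely non-decreasing sequences, and in a partial order a non-decreasing sequence need not be strictly increasing. This is exactly where the hypothesis that $S$ is a \emph{set}, so that its elements are pairwise distinct, is used, since a non-decreasing sequence of pairwise distinct elements is forced to be strictly increasing. A secondary point to handle with care is that $\N^r$ is only partially ordered, so one cannot simply sort $S$ as in the one-dimensional case; the two-step refinement in the inductive step — first controlling the initial coordinates, then the last — is precisely what circumvents the absence of a total order.
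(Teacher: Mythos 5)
Your proof is correct, but it is organized around a different key lemma than the paper's proof. You reduce everything to the statement that every sequence in $\N^r$ admits an infinite non-decreasing subsequence (i.e.\ that $\N^r$ is a well-quasi-order, Dickson's lemma), prove that by induction on $r$ with a greedy minimum selection in the base case and a two-stage refinement in the inductive step, and only at the end convert ``non-decreasing'' to ``strictly increasing'' via the pairwise distinctness of the elements of $S$. The paper instead inducts directly on the strict-chain statement for infinite sets: it projects $S$ onto $r-1$ of the coordinates and splits into cases according to whether the image of the projection is finite or infinite. If finite, pigeonhole gives an infinite fiber, which is identified with a subset of $\N$ and hence contains a chain; if infinite, the inductive hypothesis gives a strict chain in the image, which is lifted to a sequence of elements of $S$, and the remaining coordinate is then handled by exactly your greedy minimum trick ($a_{i_1}=\min\{a_1,a_2,\ldots\}$, then $a_{i_2}=\min\{a_j\ |\ j>i_1\}$, and so on). What your route buys: the sequence formulation tolerates repeated values, so no finite/infinite case split is needed (a finite projected image simply produces an eventually constant non-decreasing subsequence), and the auxiliary statement is a standard, reusable result. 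What the paper's route buys: it never leaves the world of sets and strict chains, so no final distinctness step is required, at the price of the dichotomy. Both arguments share the same engine, namely induction on $r$ together with well-ordering of $\N$ in the totally ordered coordinate, so the difference is one of packaging rather than of underlying idea.
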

\begin{proof}
We argue by induction on $r$. The case $r=1$ is clear since $\N$ is totally ordered.
Assume $r>1$. Consider the projection onto the last $r-1$ components $\text{pr}\colon \N^r\to \N^{r-1}$. If the image $\text{pr}(S)$ is finite, then for some $v$ in $\N^{r-1}$
the intersection  $S\cap \text{pr}^{-1}(v)$ is infinite so it contains an infinite chain as it can be identified with  a subset of $\N$. Assume $\text{pr}(S)$ is infinite.
By induction, it contains an infinite chain $v_1< v_2<\cdots$.  It follows that there is a sequence  of elements in $S$Ê of the form $(a_1,v_1),(a_2,v_2),\ldots$.
Define  $i_1$ to be an  index for which $a_{i_1}=\text{min}\{a_1,a_2\ldots\}$
and set $x_1:=(a_{i_1},v_{i_1})$.  Define $i_2$ to be an  index for which $a_{i_2}=
\text{min}\{a_j\ |\ j>i_1\}$ and set $x_2:=(a_{i_2},v_{i_2})$. Note that $x_1<x_2$. Continue by induction
to obtain a chain $x_1<x_2<\cdots $ in $S$. 
\end{proof}

\begin{prop}\label{prop genofindmultfilt}
The function $S\mapsto \text{\rm gen}(S)$ is a bijection between 
 the set of saturated subsets of $\N^r$ and 
the set of all finite  subsets of  $\N^r$ whose elements are not comparable. 
\end{prop}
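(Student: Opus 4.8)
The plan is to exhibit an explicit two-sided inverse to the generator map. First I would check that the map is well defined, i.e., that $\text{gen}(S)$ really is a finite subset of pairwise incomparable elements for every saturated $S$. Incomparability is immediate from the definition: if $u,v\in\text{gen}(S)$ with $u<v$, then $u\in S$ contradicts $v\in\text{gen}(S)$. Finiteness follows from Lemma~\ref{lem propgen}, since an infinite subset of $\N^r$ contains an infinite chain and hence two comparable elements, so no infinite antichain exists; in particular $\text{gen}(S)$ is finite. Note that saturation of $S$ plays no role in either of these observations, so the map indeed lands in the set of finite antichains.

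For the inverse I would send a finite antichain $A$ to its up-closure $\langle A\rangle:=\{w\in\N^r\ |\ a\leq w\text{ for some }a\in A\}$. This set is saturated by construction, so $A\mapsto\langle A\rangle$ is a well-defined map from finite antichains to saturated subsets. It then remains only to verify that the two composites are the respective identities.

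For the composite $\text{gen}(\langle A\rangle)=A$: if $a\in A$ and some $w<a$ lay in $\langle A\rangle$, then $a'\leq w<a$ for some $a'\in A$, contradicting that $A$ is an antichain; hence $a\in\text{gen}(\langle A\rangle)$, giving $A\subseteq\text{gen}(\langle A\rangle)$. Conversely, if $v\in\text{gen}(\langle A\rangle)$ then $a\leq v$ for some $a\in A\subseteq\langle A\rangle$, and minimality of $v$ forces $a=v$, so $v\in A$. For the composite $\langle\text{gen}(S)\rangle=S$ with $S$ saturated: the inclusion $\langle\text{gen}(S)\rangle\subseteq S$ holds because $\text{gen}(S)\subseteq S$ and $S$ is saturated. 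For the reverse inclusion, given $v\in S$ consider the nonempty set $\{w\in S\ |\ w\leq v\}$, which by the existence of minimal elements in $\N^r$ (see~\ref{point posetN}) has a minimal element $u$; any $u'<u$ in $S$ would satisfy $u'\leq v$ and contradict minimality of $u$, so $u\in\text{gen}(S)$ and $u\leq v$, whence $v\in\langle\text{gen}(S)\rangle$.

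The argument is essentially routine, and the only genuine subtlety is the step showing that every element of a saturated set sits above one of its generators: this is exactly where well-foundedness of $\N^r$ enters, through the existence-of-minimal-elements property recorded in~\ref{point posetN} (itself a consequence of the descending chain condition underlying Lemma~\ref{lem propgen}). Everything else is a direct unwinding of the definitions of $\text{gen}$ and of saturation.
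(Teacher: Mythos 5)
Your proof is correct and takes essentially the same approach as the paper: your up-closure $\langle A\rangle$ is precisely the paper's $\mathrm{sat}(T)$, and you verify the same two composite identities in the same way. The only difference is cosmetic --- you spell out the well-foundedness argument (minimal elements of $\{w\in S\ |\ w\leq v\}$) showing every element of a saturated set lies above a generator, a fact the paper records as observation (2) just before the proposition rather than inside its proof.
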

\begin{proof}
For a subset $T$Ê in $\N^r$, define:
\[\text{sat}(T):=\{v\ |\ \text{there is } u\text{ in } T
\text{ such that } v\geq u\}\]
We are going  to prove that the function $T\mapsto \text{sat}(T)$  is the inverse to $S\mapsto
\text{gen}(S)$.
Since any element in $S$ is comparable to some element in $\text{gen}(S)$, it follows that 
$S\subset \text{sat}(\text{gen}(S))$. In the case 
$S$ is saturated,  $\text{sat}(\text{gen}(S))\subset  S$ and hence these two sets are equal.

Consider  an element $v$ in $\text{gen}(\text{sat}(T))$.  Since $v$ is in $\text{sat}(T)$,
$u\leq v$ for some $u$Ê in $T$.
 If $u\not=v$, then by definition of $\text{gen}(\text{sat}(T))$, $u$ could not belong to $\text{sat}(T)$, which is a contradiction. Thus $u=v$ and $v$ belongs to $T$. This shows the inclusion $\text{gen}(\text{sat}(T))\subset T$. Assume $T$ consists of non-comparable elements.  Let $v$ be in $T$ and $w<v$. Then $w$ can not belong to
$ \text{sat}(T)$, otherwise, for some $u$ Êin $T$, $u\leq w$ and we would have two comparable elements $v$ and $u$ in $T$.  It follows that  $v$ belongs to $ \text{gen}(\text{sat}(T))$. We can conclude that $T\subset  \text{gen}(\text{sat}(T))$ and hence these two sets are equal.
\end{proof}

\begin{cor}\label{cor posetofideals}
Let $R$ be a commutative ring with a unit.  The poset of  indecomposable multifiltrations of sets is isomorphic to  the  inclusion poset of monomial ideals in
$R[x_1,\ldots, x_r]$.
\end{cor}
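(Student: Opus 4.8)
The plan is to build the claimed isomorphism by composing the one already supplied by Proposition~\ref{prop indmultfilt}. That result identifies the poset of indecomposable multifiltrations of sets with the inclusion poset of non-empty saturated subsets of $\N^r$ via the support function. It therefore suffices to produce an inclusion-preserving bijection between saturated subsets of $\N^r$ and monomial ideals of $R[x_1,\ldots,x_r]$, matching the non-empty saturated subsets with the non-zero monomial ideals (the empty set and the zero ideal both corresponding to the initial object, which is excluded from the indecomposables).

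First I would introduce the two candidate maps. To a saturated subset $S\subseteq\N^r$ associate the $R$-submodule $I(S)\subseteq R[x_1,\ldots,x_r]$ spanned by the monomials $\{x^v\mid v\in S\}$; to a monomial ideal $J$ associate $S(J):=\{v\in\N^r\mid x^v\in J\}$. The routine checks are that $I(S)$ is indeed a monomial ideal and that $S(J)$ is saturated. For the former, closure under the ring action follows from $x^w x^v=x^{v+w}$ together with $v+w\geq v$ and saturation of $S$; moreover $I(S)$ is generated by $\{x^v\mid v\in\text{gen}(S)\}$, since every element of $S$ dominates some element of $\text{gen}(S)$. For the latter, if $v\in S(J)$ and $w\geq v$ then $x^w=x^{w-v}x^v\in J$, so $w\in S(J)$.

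Next I would verify the two maps are mutually inverse. The equality $S(I(S))=S$ uses that distinct monomials are $R$-linearly independent in $R[x_1,\ldots,x_r]$, so $x^v$ lies in the $R$-span of $\{x^u\mid u\in S\}$ exactly when $v\in S$. Inclusion-preservation in both directions is then immediate from the definitions.

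The one point requiring genuine care, and the step I expect to be the main obstacle, is the identity $I(S(J))=J$, that is, that every monomial ideal is recovered as the $R$-span of the monomials it contains. Over a field this is the familiar statement that membership in a monomial ideal is detected term by term, but for an arbitrary commutative ring $R$ I would argue through the grading. Since $J$ is generated by homogeneous monomials it is an $\N^r$-graded submodule, so $J=\bigoplus_v J_v$ with each $J_v\subseteq R x^v\cong R$. Writing $J=\sum_\alpha R[x_1,\ldots,x_r]\,x^{v_\alpha}$ and extracting the degree-$v$ component, one finds that $J_v=R x^v$ whenever some generator satisfies $v_\alpha\leq v$ (the coefficient $r_{v-v_\alpha}$ ranging over all of $R$), and $J_v=0$ otherwise. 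Hence each $J_v$ is either $0$ or all of $R x^v$, so $J=\bigoplus_{v\in S(J)}R x^v=I(S(J))$. Composing this inclusion-preserving bijection with Proposition~\ref{prop indmultfilt} then yields the asserted isomorphism of posets.
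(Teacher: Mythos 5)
Your proof is correct, and its skeleton is the same as the paper's: both routes factor through Proposition~\ref{prop indmultfilt} and reduce the corollary to an inclusion-preserving bijection between saturated subsets of $\N^r$ and monomial ideals. The difference lies in how that bijection is realized and verified. The paper sends an indecomposable multifiltration $F$ to the ideal $\langle x^v \mid v\in \text{gen}(\text{supp}(F))\rangle$ generated by the minimal generators of the support, builds the inverse $\Phi$ from a monomial ideal $I$ via $S_I=\{v \mid x^v\in I\}$ exactly as you do, and then dismisses the two inverse checks: it asserts that $F\to\Phi(\Psi(F))$ is an isomorphism ``as both of these functors have the same support'' and that $\Psi(\Phi(I))=I$ is ``immediate to verify.'' The first of these assertions rests on precisely the point you single out as the crux: over an arbitrary commutative ring, a monomial lies in a monomial ideal only if its exponent dominates the exponent of some generator --- equivalently, a monomial ideal equals the $R$-span of the monomials it contains. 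Your graded-component argument (each piece $J_v$ is either $0$ or all of $Rx^v$) supplies this cleanly, and working with the full span $\bigoplus_{v\in S}Rx^v$ rather than with the generating set $\text{gen}(S)$ is what makes both inverse identities transparent; so your write-up is best viewed as a more complete version of the paper's proof rather than a different strategy, trading the paper's brevity (and its direct reuse of $\text{gen}$, which it needs later anyway) for rigor that holds verbatim over any coefficient ring. Two small remarks: your explicit matching of the empty saturated set and the zero ideal with the excluded initial objects tidies up a boundary case the paper leaves silent; and both your argument and the paper's implicitly assume $R\neq 0$ (linear independence of monomials, and the step extracting $1=\sum_\alpha (f_\alpha)_{v-v_\alpha}$, fail over the zero ring, where all monomial ideals coincide), a harmless shared convention.
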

\begin{proof}
Let $F\colon \N^r\to \text{Sets}$ be a functor.  Define $\Psi(F)$ to be the monomial ideal in
$R[x_1,\ldots, x_r]$ given by:
\[\Psi(F):=\langle x^v\ |\ v\in \text{gen}(\text{supp}(F))\rangle\]

If there is a  natural transformation $F\to G$, then $\text{supp}(F)\subset \text{supp}(G)$.
We claim that in this case there is an inclusion:
\[\Psi(F)=\langle x^v\ |\ v\in \text{gen}(\text{supp}(F))\rangle\subset
\langle x^v\ |\ v\in \text{gen}(\text{supp}(G))\rangle=\Psi(G)
\]
To see this let $v$ be in $ \text{gen}(\text{supp}(F))$. We show that there is
$u$ in $ \text{gen}(\text{supp}(G))$ such that $u\leq v$. That would imply 
$x^v$ is divisible by $x^u$ proving the claim.
If $v$ belongs   to $\text{gen}(\text{supp}(G))$ there is nothing to prove. 
Assume that this is not the case. Since $v$ belongs to $\text{supp}(G)$,
 there is $u$
in $\text{supp}(G)$ for which indeed $u\leq v$.
In this way  $\Psi$ defines a functor from $\text{Fun}(\N^r,\text{Sets})$ to the inclusion poset
of monomial ideals in $R[x_1,\ldots, x_r]$.
The restriction of $\Psi$ to indecomposable multifiltrations is a  function of posets. 

On the other hand let $I$ be a monomial ideal in $R[x_1,\ldots,x_r ]$, consider the set $S_I:=\{v\in \N^r | x^v \in I\}$.
This is a saturated subset of $\N^r$ because if $u\leq v$ and $x^u$ is in $I$ then $x^v$ must also be in $I$. We define $\Phi(I)$ to be the indecomposable
 multifiltration associated to $S_I$ (see~\ref{prop indmultfilt}). If there is an inclusion of ideals $I \subseteq J$, then $S_I \subseteq S_J$ and  again by~\ref{prop indmultfilt}  
 we have an inclusion $\Phi(I)\subseteq \Phi(J)$. In this way we obtain a functor $\Phi$  between the poset of monomial ideals to the poset of indecomposable multifiltrations. Given a functor $F:\N^r \rightarrow \text{Sets}$ there is a unique natural transformation $F \rightarrow \Phi(\Psi(F))$ and this is an isomorphism if $F$ is an indecomposable multifiltration as both of these functors have the same support. If $I$ is a monomial ideal in $R[x_1,\ldots x_r]$ then it is also immediate to verify that $\Psi(\Phi(I))= I$.
\end{proof}

According to Propositions~\ref{prop indmultfilt} and~\ref{prop genofindmultfilt} the function
 $F\mapsto \text{gen}(\text{supp}(F))$ is a bijection between the set of indecomposable 
 multifiltrations of sets and  finite  non-empty subsets of  $\N^r$ whose elements are not comparable. 
We finish this section with  giving a constructive formula for the inverse to this function.
Let $T$ be a  subset of  $\N^r$. 
Define $F_T\colon \N^r\to \text{Sets}$ to be a functor given by the following coequalizer in $\text{Fun}(\N^r,\text{Sets})$:
\[F_T:=\text{colim}\left(
\xymatrix{\displaystyle{\coprod_{v_0\not=  v_1\in T} \text{mor}_{\N^r}(\text{max}\{v_0,v_1\},-)}
\ar@<1.2ex>[rr]^-{\pi_0}\ar@<-1.2ex>[rr]_-{\pi_1} &&
\displaystyle{\coprod_{v\in T} \text{mor}_{\N^r}(v,-)}}\right)\]
where on the component indexed by $v_0\not=v_1\in  T$, the  map $\pi_i$, 
is given by the unique natural transformation $\text{mor}_{\N^r}(\text{max}\{v_0,v_1\},-)\to \text{mor}_{\N^r}(v_i,-)$
induced by $v_i\leq \text{max}\{v_0,v_1\}$.
\begin{prop}\label{finite set}
If $T\subset \N^r$ is not empty, then
the functor $F_T$ is an indecomposable 
 multifiltration whose support is given by $\text{\rm sat}(T)$.
\end{prop}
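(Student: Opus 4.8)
The plan is to compute $F_T$ objectwise. Colimits in the functor category $\text{Fun}(\N^r,\text{Sets})$ are computed objectwise because $\text{Sets}$ is cocomplete, so for each fixed $v$ in $\N^r$ the value $F_T(v)$ is the coequalizer in $\text{Sets}$ of the two maps $\pi_0,\pi_1$ evaluated at $v$. First I would describe the two sets being coequalized at $v$. Since $\text{mor}_{\N^r}(w,v)$ is a one point set when $w\leq v$ and is empty otherwise, the set $\left(\coprod_{w\in T}\text{mor}_{\N^r}(w,-)\right)(v)$ is in bijection with $\{w\in T\ |\ w\leq v\}$; write $e_w$ for the element indexed by such a $w$. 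Likewise the source $\left(\coprod_{v_0\neq v_1\in T}\text{mor}_{\N^r}(\text{max}\{v_0,v_1\},-)\right)(v)$ is in bijection with the set of pairs $v_0\neq v_1$ in $T$ with $\text{max}\{v_0,v_1\}\leq v$, that is, with both $v_0\leq v$ and $v_1\leq v$. By construction $\pi_0$ sends such a pair to $e_{v_0}$ and $\pi_1$ sends it to $e_{v_1}$.

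Next I would identify the coequalizer explicitly. It is the quotient of $\{e_w\ |\ w\in T,\ w\leq v\}$ by the equivalence relation generated by $e_{v_0}\sim e_{v_1}$ whenever $v_0,v_1$ lie in $T$ and both are $\leq v$. When the indexing set $\{w\in T\ |\ w\leq v\}$ is non-empty, every pair of its elements is directly related, so (using transitivity when there are three or more generators) all the $e_w$ collapse to a single class and the quotient is a one point set; when the indexing set is empty the quotient is empty. Hence $F_T(v)$ is a one point set precisely when there is some $w$ in $T$ with $w\leq v$, and is empty otherwise. In other words $F_T(v)$ always has at most one element, and it is non-empty exactly when $v\in\text{sat}(T)$, giving $\text{supp}(F_T)=\text{sat}(T)$.

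To finish, since $T$ is non-empty I may pick any $w$ in $T$, and then $F_T(w)$ is non-empty, so the values of $F_T$ are not all empty. Combining this with the fact that each $F_T(v)$ has at most one element, Corollary~\ref{cor charindmult} shows that $F_T$ is an indecomposable multifiltration, completing the proof. The main obstacle is the computation in the middle paragraph: one must justify that the coequalizer is taken objectwise and then carefully verify that the equivalence relation generated by the $\pi_0,\pi_1$ relations collapses all generators $e_w$ with $w\leq v$ into one class, checking in particular that every relevant pair is covered so that transitivity does the rest. Everything else is a direct application of the characterization of indecomposable multifiltrations and of the definition of $\text{sat}(T)$.
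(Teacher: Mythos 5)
Your proof is correct and follows essentially the same route as the paper: evaluate the coequalizer objectwise, observe that any two elements $v_0\leq v$, $v_1\leq v$ of the target are identified via the generator indexed by the pair $(v_0,v_1)$ with $\max\{v_0,v_1\}\leq v$, conclude each value has at most one element with support $\text{sat}(T)$, and invoke Corollary~\ref{cor charindmult}. Your write-up is slightly more explicit about the objectwise computation of colimits and the generated equivalence relation, but the argument is the same.
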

\begin{proof}
Let $u$ be an element in $\N^r$. The set $F_T(u)$ Êis a quotient of $\coprod_{v\in T} \text{mor}_{\N^r}(v,u)$ and hence  $F_T(u)\not=\emptyset$ if and only if $\coprod_{v\in T} \text{mor}_{\N^r}(v,u)\not=\emptyset$, implying  the  equality $\text{supp}(F_T)= \text{\rm sat}(T)$. In particular if $T$ Êis non-empty,  then neither is $\text{supp}(F_T)$.

 Let $ v_0\leq u$ and $ v_1\leq u$ be two different elements in $\coprod_{v\in T} \text{mor}_{\N^r}(v,u)$.
 These  inequalities give an element  $\text{max}\{v_0,v_1\}\leq u$ in $\coprod_{v_0\not=v_1\in  T} \text{mor}_{\N^r}(\text{max}\{v_0,v_1\},-)$ which  is  mapped via $\pi_i$ to $v_i\leq u$. The elements $ v_0\leq u$ and $ v_1\leq u$ are therefore sent, via the quotient map,  to the same element in $F_T(u)$. The set  $F_T(u)$ can  therefore have at most one element and hence, according to~\ref{cor charindmult}, $F_T$ is 
 an  indecomposable 
 multifiltration.
\end{proof}
\begin{cor}\label{presentation}
If  $F$  is an indecomposable 
 multifiltration, then it is isomorphic to $F_{\text{\rm gen}(\text{\rm supp}(F))}$.
\end{cor}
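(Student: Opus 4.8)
The plan is to prove the isomorphism by reducing everything to a comparison of supports, exploiting the classification of indecomposable multifiltrations by their support. Write $T := \text{gen}(\text{supp}(F))$. Since $F$ is an indecomposable multifiltration, Proposition~\ref{prop indecomp} says $\text{colim}\, F$ is a one point set, so $F$ is not the initial object and $\text{supp}(F)$ is non-empty; as every element of a non-empty set is comparable to some element of its minimal set of generators, the set $T$ is non-empty as well. This is exactly the hypothesis required by Proposition~\ref{finite set}, which then guarantees that $F_T$ is an indecomposable multifiltration and computes its support as $\text{supp}(F_T) = \text{sat}(T) = \text{sat}(\text{gen}(\text{supp}(F)))$.

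Next I would identify $\text{sat}(\text{gen}(\text{supp}(F)))$ with $\text{supp}(F)$ itself. The support of any functor is saturated (a function cannot have non-empty source and empty target, so $v\in\text{supp}(F)$ and $v\leq w$ force $w\in\text{supp}(F)$), hence $\text{supp}(F)$ is a saturated non-empty subset of $\N^r$. Proposition~\ref{prop genofindmultfilt} establishes that $\text{sat}$ and $\text{gen}$ are mutually inverse on saturated sets, so $\text{sat}(\text{gen}(\text{supp}(F))) = \text{supp}(F)$. Combined with the previous step this gives the key equality $\text{supp}(F_T) = \text{supp}(F)$.

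Finally, both $F$ and $F_T$ are indecomposable multifiltrations with the same support. Proposition~\ref{prop indmultfilt} asserts that the support function is an isomorphism of posets onto the saturated non-empty subsets, so in particular it is injective on isomorphism classes, and two indecomposable multifiltrations with equal support must be isomorphic. Applying this to the equality $\text{supp}(F_T)=\text{supp}(F)$ yields $F\cong F_T = F_{\text{gen}(\text{supp}(F))}$, which is the claim. I do not anticipate a genuine obstacle: the corollary is essentially a bookkeeping consequence of the three preceding results, and the only points deserving a moment's care are checking that $T$ is non-empty (so that Proposition~\ref{finite set} applies) and that $\text{supp}(F)$ is saturated (so that the inversion in Proposition~\ref{prop genofindmultfilt} can be invoked); both follow immediately from the hypothesis that $F$ is an indecomposable multifiltration.
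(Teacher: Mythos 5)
Your proof is correct and follows the same route the paper intends: the corollary is stated there as an immediate consequence of Proposition~\ref{finite set}, whose hypotheses and conclusion you verify exactly as the paper does, combining it with Proposition~\ref{prop genofindmultfilt} to get $\text{supp}(F_T)=\text{sat}(\text{gen}(\text{supp}(F)))=\text{supp}(F)$ and with Proposition~\ref{prop indmultfilt} to conclude that indecomposable multifiltrations with equal support are isomorphic. Your two points of care (non-emptiness of $\text{gen}(\text{supp}(F))$ and saturation of $\text{supp}(F)$) are exactly the right ones and are handled correctly.
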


We can use the above construction to give a presentation of any multifiltration. Here is a 
procedure of how to do that. Let $F\colon \N^r\to \text{\rm Sets}$ be a multifiltration.  
For any $v$ in $\N^r$, index  elements of $F(v)$ by elements of $\text{colim}\, F$ as follows: $y$ in $F(v)$
has index $x$ in $\text{colim}\, F$ if $p_v(y)=x$. Let $F[x]$ be the subfunctor of $F$ whose elements
have index $x\in\text{colim}\, F$ (see the proof of~\ref{prop indecomp}). It is an indecomposable multifiltration. Recall that $F=\coprod_{x\in\text{colim}\, F}F[x]$.
 The functor  $F$ is then isomorphic to:
 \[\coprod_{x\in\text{\rm colim}\, F} F_{\text{\rm gen}(\text{\rm supp}(F[x]))}\]

Since we are going to use this presentation, we need to introduce notation describing the involved functors. 

\begin{itemize}
\item  For any $x$ in $\text{colim}\, F$, define:
\[\mathcal{G}F[x]:=\coprod_{v \in  \text{\rm gen}(\text{\rm supp}(F[x]))}\text{mor}_{\N^r}(v,-)\]
\[\mathcal{K}F[x]:=\coprod_{v_0\not=v_1 \in  \text{\rm gen}(\text{\rm supp}(F[x]))}
 \text{mor}_{\N^r}(\text{max}\{v_0,v_1\},-)
\]
\item Recall that  there are  natural transformations $\pi_0[x],\pi_1[x]\colon\mathcal{K}F[x]\to 
\mathcal{G}F[x]$ induced by $v_0\leq \text{max}\{v_0,v_1\}$ and $v_1\leq \text{max}\{v_0,v_1\}$.
\item Since $F[x]$ is  indecomposable, there is a unique natural transformation denoted by $p_{F,x}\colon \mathcal{G}F[x]\to  F[x]$.  This natural transformation has the universal property describing
$F[x]$ as the colimit of the diagram:
\[\xymatrix@C=50pt{\mathcal{K}F[x]\ar@<1.2ex>[r]^-{\pi_0[x]}\ar@<-1.2ex>[r]_-{\pi_1[x]}  &\mathcal{G}F[x]}\]
\end{itemize}

By summing over all $x$ in $\text{colim}\, F$, we obtain  functors
$\mathcal{G}F\colon=\coprod_{x\in \text{colim}\, F} \mathcal{G}F[x]$,  $\mathcal{K}F\colon=\coprod_{x\in \text{colim}\, F} \mathcal{K}F[x]$ and  natural transformations
$\pi_0,\pi_1\colon  \mathcal{K}F\to  \mathcal{G}F$ and $p_F\colon =\coprod_{x \in \text{colim}\, F} p_{F,x}
\colon \mathcal{G}F\to F$.  The  natural transformation $p_F$ has the universal property describing
$F$ as the colimit of the diagram:
\[\xymatrix@C=50pt{\mathcal{K}F\ar@<1.2ex>[r]^-{\pi_0}\ar@<-1.2ex>[r]_-{\pi_1}  &\mathcal{G}F}\]

Although the natural transformations $p_{F,x}$ are unique, the construction $F\mapsto \mathcal{G}F$ is not functorial. Nevertheless we attempt to define it also for a natural transformation $\alpha\colon F\to G$. Consider the  map of sets $\text{colim}\, \alpha\colon \text{colim}\,F \to \text{colim}\,G$. Since for any $v$ in $\N^r$, the following square commutes, we get an inclusion $\alpha(F[x])\subseteq G[\text{colim}\,  \alpha(x)]$
\[
\xymatrix@R=17pt@C=35pt
{F(v) \ar[r]^-{ \alpha (v)}\ar[d]_{p_v} & G(v) \ar[d]^{p_v} \\
\text{colim}\,  F \ar[r]^-{ \text{colim}\, \alpha} & \text{colim}\,  G}
\]
It follows that  the set
$\{w\in  \text{\rm gen}(\text{\rm supp}(G[\text{colim}\, \alpha(x)])\ |\ w\leq v\}$ is not empty for any  $v$ in  $\text{\rm gen}(\text{\rm supp}(F[x]))$.
We can order this set using the lexicographical  order and  define
$w_{\alpha,x,v}$ to be the smallest element of this set. Since $w_{\alpha,x,v}\leq v$, there is a unique natural transformation $\text{mor}_{\N^r}(v,-)\to\text{mor}_{\N^r}(w_{\alpha,x,v},-)$.
Define $\overline{\alpha}\colon \mathcal{G}F\to \mathcal{G}G$ to be the natural transformation
which on the summand $\text{mor}_{\N^r}(v,-)$ 
indexed by  $x$ in $\text{colim}\, F$ and 
$v$ in $\text{gen}(\text{supp}(F[x]))$ is given by the composition of $\text{mor}_{\N^r}(v,-)\to\text{mor}_{\N^r}(w_{\alpha,x,v},-)$ and the inclusion into  $\mathcal{G}G$ of  the summand  $\text{mor}_{\N^r}(w_{\alpha,x,v},-)$   indexed by $ \text{colim}\, \alpha(x)$  in  $ \text{colim}\, G$ and  
$w_{\alpha,x,v}$ in $\text{\rm gen}(\text{\rm supp}(G[\text{colim}\, \alpha(x)])$.
Because of these choices we obtain a commutative diagram of natural transformations:
\[\xymatrix{\mathcal{G}F\rto^{\overline\alpha} \dto_{p_F}& \mathcal{G}G\dto^{p_G}\\
F\rto^{\alpha} & G}\]
Explicitly:
\[\xymatrix@R=35pt@C=35pt{
\text{mor}_{\N^r}(v,-)\ar@{^(->}[d]|{\text{summand indexed by $x$ and $v$}}\rto &
\text{mor}_{\N^r}(w_{\alpha,x,v},-)\ar@{^(->}[d]|{\text{summand indexed by $\text{colim}\, \alpha(x)$ and $w_{\alpha,x,v}$}} \\
\displaystyle{\coprod_{x \in \text{colim}\,F}\coprod_{v \in  \text{gen}(\text{supp}(F[x]))}\text{mor}_{\N^r}(v,-)} \ar[r]^-{ \overline{\alpha}}\ar[d]_-{p_F} &\displaystyle{\coprod_{x \in \text{colim}\, G}\coprod_{v \in  \text{gen}(\text{supp}(G[x]))}\text{mor}_{\N^r}(v,-)}\ar[d]^-{p_G} \\
\coprod_{x \in \text{colim}\, F} F[x] \ar[r]^-{\alpha} & \coprod_{x \in \text{colim}\, G} G[x] }
\] 
It is important to  point out that the assignment $(\alpha\colon F\to G)\mapsto (\overline{\alpha}\colon \mathcal{G}F\to \mathcal{G}G)$ is not a functor. It is not true in general that $\overline{\beta\, \alpha}$
equals $\overline{\beta}\,\overline{\alpha}$.

\section{Set valued vs.\ $R\text{-Mod}$ valued  functors}\label{sec funmod}
Let $R$ be a commutative ring with identity.
Recall that we identify the category of functors 
$\text{Fun}(\N^r,R\text{-Mod})$ with  the category of $\N^r$-graded $R[x_1,\ldots,x_r]$-modules by  assigning  to $F\colon\N^r\to R\text{-Mod}$ the 
$\N^r$-graded $R[x_1,\ldots,x_r]$-module  given by ${\boldsymbol F}=\oplus_{v\in \N^r}F(v)$  (see~\ref{point identfunRmod}).
Via the above identification the free  functor $R\text{mor}_{\N^r}(0,-)$ (see~\ref{point functors}) is mapped to  
the module $R[x_1,\ldots,x_r]$. Thus  sub-functors of $R\text{mor}_{\N^r}(0,-)$ are identified with
 $\N^r$-graded ideals in $R[x_1,\ldots,x_r]$.   Among these sub-functors  there are
 the $R$-spans of indecomposable multifiltrations of sets and among the  $\N^r$-graded ideals in $R[x_1,\ldots,x_r]$ there are the monomial ideals. Note that for an indecomposable multifiltration of sets $F\colon\N^r\to\text{Sets}$, 
 the $\N^r$-graded ideal  $\boldsymbol{RF}\subset R[x_1,\ldots,x_r]$ coincides with the monomial ideal  $\Psi(F)$  given in the proof of Corollary~\ref{cor posetofideals}. It thus follows from this corollary that the sub-functors of $R\text{mor}_{\N^r}(0,-)$ that are identified with  monomial ideals are exactly the $R$-spans of indecomposable multifiltrations of sets. Since 
 monomial ideals are indecomposable $R[x_1,\ldots,x_r]$-modules, then so are the $R$-spans of indecomposable multifiltrations of sets. These are the easiest indecomposable multifiltrations of $R$-modules.
 The following is a key fact  about  their finite sums:
 
 \begin{prop}\label{prop sepciamultifltofmod}
 Let $\{F_i\colon\N^r\to \text{\rm Sets}\}_{1\leq i\leq n}$ and $\{G_j\colon\N^r\to \text{\rm Sets}\}_{1\leq j\leq m}$ be two finite families of  indecomposable multifiltrations of sets.
 If $\oplus_{i=1}^{n}RF_i\colon\N^r\to R\text{\rm -Mod}$ and  $\oplus_{j=1}^{m}RG_j\colon\N^r\to R\text{\rm -Mod}$ are isomorphic as functors with values  in $R\text{\rm -Mod}$, then $n=m$, and there is a permutation $\sigma$  of $\{1,\ldots, n\}$ for which $F_i\colon \N^r\to\text{\rm Sets}$ and $G_{\sigma(i)}\colon \N^r\to\text{\rm Sets}$  are isomorphic for anyÊ $i$.
 \end{prop}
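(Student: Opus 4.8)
The statement asserts a uniqueness-of-decomposition result: a finite direct sum of $R$-spans of indecomposable multifiltrations of sets determines the summands up to isomorphism and permutation. Let me think about how to prove this.

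We have two families $\{F_i\}$ and $\{G_j\}$ of indecomposable multifiltrations of sets. Each $F_i$ has values that are either empty or a single point (by Corollary \ref{cor charindmult}), so $RF_i$ has values that are either $0$ or $R$ (a free rank-one module). The support of $F_i$ is a saturated subset of $\N^r$, and via Corollary \ref{cor posetofideals} the module $\boldsymbol{RF_i}$ is exactly the monomial ideal $\Psi(F_i)$. So the hypothesis is that $\bigoplus_i \boldsymbol{RF_i} \cong \bigoplus_j \boldsymbol{RG_j}$ as $\N^r$-graded $R[x_1,\ldots,x_r]$-modules.

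The natural approach is to extract a combinatorial invariant from each summand that survives the direct sum and the isomorphism. Since $F_i$ is determined by its support $\text{supp}(F_i)$, which in turn is determined by $\text{gen}(\text{supp}(F_i))$ (Propositions \ref{prop indmultfilt}, \ref{prop genofindmultfilt}), it suffices to recover the multiset $\{\text{supp}(F_i)\}$ from the isomorphism class of the sum. The graded dimension (Hilbert function) alone is not enough since it only records $\sum_i \dim_R(RF_i(v)) = \#\{i : v \in \text{supp}(F_i)\}$, which is the sum of indicator functions of the supports and does not in general determine the individual supports. So I would instead work with the whole module structure.

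I would proceed as follows.

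\begin{enumerate}
\item Fix an isomorphism $\theta\colon \bigoplus_i RF_i \xrightarrow{\sim} \bigoplus_j RG_j$ of functors to $R\text{-Mod}$. For each $v$, $\theta(v)$ is an $R$-linear isomorphism between free modules, and compatibility with the structure maps $F(v\leq w)$ (i.e.\ with the $x_k$-actions) is the key constraint.

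\item The central invariant is the \emph{set of grades where a new generator is forced}, read off from the module structure. Concretely, for each $v$, the inclusion $\text{im}\big(\bigoplus RF_i(v-e_k \leq v)\big) \subseteq \bigoplus RF_i(v)$ (summed over $k$ with $v_k>0$) has a cokernel whose dimension counts the generators of the whole module in grade $v$. Since each $\boldsymbol{RF_i}$ is a monomial ideal, its generators in grade $v$ are detected exactly when $v \in \text{gen}(\text{supp}(F_i))$. Thus the total number of generators in grade $v$ of the sum equals $\sum_i \#(\text{gen}(\text{supp}(F_i)) \cap \{v\})$, and this is an isomorphism invariant.
\end{enumerate}

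The difficulty is that counting generators per grade still does not separate the summands, because two different monomial ideals can contribute generators in the same grades. The honest route, which I expect to be the main obstacle, is to reconstruct each indecomposable summand intrinsically rather than via numerical invariants. Here I would lean on indecomposability: the modules $\boldsymbol{RF_i}$ are indecomposable (stated in the paragraph preceding the proposition), finitely generated, and $\N^r$-graded. If one can show that the endomorphism rings are local (or invoke a Krull–Schmidt–Azumaya type theorem for this graded category), then uniqueness of decomposition into indecomposables is automatic, and it only remains to match indecomposable $R$-modules back to their set-level origins. The matching step is then clean: an indecomposable summand $\boldsymbol{M}$ of the sum is a monomial ideal, hence equals $\Psi(F)$ for a unique indecomposable multifiltration $F$ by Corollary \ref{cor posetofideals}, and $F \cong F'$ iff $\Psi(F) \cong \Psi(F')$ as graded modules iff they have the same support. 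The subtlety with Krull–Schmidt is that $R$ is only assumed commutative with identity, so the endomorphism rings need not be local in general; I would therefore prefer to avoid a blanket Krull–Schmidt appeal and instead argue directly.

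For the direct argument I would exploit that each $RF_i(v)$ is $0$ or $R$ and the structure maps are either $0$ or the identity on $R$. I would look at grades $v$ that are minimal in $\bigcup_i \text{supp}(F_i)$: at such a minimal grade $v$, the summands active are exactly those $F_i$ with $v \in \text{gen}(\text{supp}(F_i))$, and one can read off, grade by grade going upward, which generators "turn on." Formally, I would induct on the number of grades in the (finite) set $\bigcup_i \text{gen}(\text{supp}(F_i))$, or on the componentwise-minimal such grade: isolate the summands generated in a minimal grade $v$, use that $\theta$ must carry the grade-$v$ part of the socle/generators of the left side isomorphically to that of the right side (forcing the count of summands with $v$ as a minimal generator to agree on both sides), peel these off, and recurse on the quotient. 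Since the $F_i$ are indecomposable with supports saturated and finitely generated (finiteness via Lemma \ref{lem propgen}), this induction terminates and yields both $n=m$ and the bijection $\sigma$ matching supports, hence the functors $F_i$ and $G_{\sigma(i)}$ by Proposition \ref{prop indmultfilt}.
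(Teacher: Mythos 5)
Your proposal correctly diagnoses the obstructions (the Hilbert function and per-grade generator counts are insufficient; Krull--Schmidt is unavailable over a general commutative $R$), but the ``direct argument'' you fall back on has a genuine gap, and it sits exactly where the paper does its real work. The missing ingredient is a computation of hom-sets: for indecomposable multifiltrations $F,G$ of sets, the $R$-span functor induces an isomorphism $R\mathrm{Nat}_{\mathrm{Sets}}(F,G)\cong \mathrm{Nat}_{R\text{-Mod}}(RF,RG)$, so this hom-module is $R$ when $\mathrm{supp}(F)\subset\mathrm{supp}(G)$ and $0$ otherwise. This is what lets one control the components $RF_i\to RG_j$ of an arbitrary isomorphism $\phi$ and of its inverse $\psi$: they are scalars, nonzero only when supports are nested. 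The paper then inducts on $n$ by choosing a support $T$ \emph{maximal} among the $\mathrm{supp}(F_i)$; maximality forces $\phi$ and $\psi$ to restrict to mutually inverse isomorphisms between the sub-sums of summands with support exactly $T$, a colimit/rank argument shows these two blocks have equally many summands, and the quotients by these blocks are again direct sums of $R$-spans of indecomposable multifiltrations, so the induction closes, with Proposition~\ref{prop indmultfilt} converting equality of supports into isomorphism of functors. Your sketch contains no substitute for this hom computation: the invariant $M_v/\sum_k x_kM_{v-e_k}$ only yields that the \emph{number} of generators in each grade agrees on both sides, not a matching of summands, and, as you yourself note, counts alone cannot distinguish, e.g., $\langle x^{(0,1)}\rangle\oplus\langle x^{(1,0)}\rangle$ from $\langle x^{(0,1)},x^{(1,0)}\rangle$.

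The peeling recursion also fails on its own terms. If ``peel off'' means quotient by the submodule generated by the grade-$v$ piece (with $v$ minimal in the union of supports), the resulting quotients leave the class of modules covered by the proposition: for instance $\langle x^{(0,1)},x^{(1,0)}\rangle/\langle x^{(0,1)}\rangle$ is annihilated by $x_2$, hence has torsion and is not a direct sum of monomial ideals (on any nonzero monomial ideal each $x_k$ acts injectively); so the inductive hypothesis cannot be applied to the quotients, and you would need to formulate and prove a strictly more general statement. If instead ``peel off'' means split off the summands having a generator at $v$, then you are presupposing that $\theta$ carries the left-hand sub-sum of such summands isomorphically onto the right-hand one --- which is an instance of precisely what is to be proved; moreover at a minimal grade the active summands need not be isomorphic to one another (e.g.\ $\langle x^{(0,1)}\rangle$ and $\langle x^{(0,1)},x^{(1,0)}\rangle$ at $v=(0,1)$), so no progress is made. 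Peeling at maximal supports, with the hom computation in hand, is what makes the induction go through in the paper's proof.
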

 \begin{proof}
 First note that if $F,G\colon \N^r\to \text{Sets}$ are indecomposable multifiltrations, then
 the map $R\text{Nat}_{\text{Sets}}(F,G)\to \text{Nat}_{R\text{-Mod}}(RF,RG)$,
 induced by the $R$-span functor, is an isomorphism of $R$Ê modules (this is not true  if $F$ is a  multifiltration but not indecomposable). 
 Consequently  the $R$ module $\text{Nat}_{R\text{-Mod}}(RF,RG)$ is isomorphic to $R$ 
 if $\text{supp}(F)\subset \text{supp}(G)$ or it is trivial if $\text{supp}(F)\not\subset \text{supp}(G)$.
 
 We proceed by induction on $n$ to prove the proposition.
 Assume $n=1$. Since $RF$ and $\oplus_{j=1}^{m}RG_j$ are isomorphic, then so are their colimits which as $R$ modules are isomorphic to respectively $R$ and $\oplus_{j=1}^{m}R$. For commutative rings the rank of a free module is a well define invariant and hence $m=1$. The functors $RF$ and  $RG_1$ are therefore  isomorphic and  by the discussion above $\text{supp}(F)$ and $\text{supp}(G)$ are the same subsets of $\N^r$. We can then use~\ref{prop indmultfilt} to get Ê $F$ and $G$ are isomorphic.
 
Assume $n>1$. Consider the subsets $\text{supp}(F_i)\subset \N^r$ for $1\leq i\leq n$ and choose among them a maximal one  $T$ with respect to the inclusion. By permuting we can assume that:
 \[\text{supp}(F_i) = T  \text{, if } 1\leq i\leq n'\ \ \ \ \ \text{and}  \ \ \ \ \ 
\text{supp}(F_i)  \not= T \text{, if } n'<i\leq n
\]
Let $\phi\colon\oplus_{i=1}^{n}RF_i\to \oplus_{j=1}^{m}RG_j$ and 
$\psi\colon \oplus_{j=1}^{m}RG_j\to\oplus_{i=1}^{n}RF_i $ be inverse isomorphisms.
Since the restriction of $\phi$ to $F_1$ is  non trivial, there is $j$ such that
$T=\text{supp}(F_1)\subset\text{supp}(G_j)$. By the same argument, since the restriction of $\psi$ to $G_j$ is not  trivial, there is $l$ for which
$ \text{supp}(G_j)\subset \text{supp}(F_l)$. As we chose $T$ to be a maximal among
the supports of $F_i$'s, we get $l\leq n'$ and $ \text{supp}(G_j)=T$.  Again by permuting if necessary we can assume that:
\[T=\text{supp}(G_i)   \text{, if } 1\leq i\leq m'\ \ \ \ \ \text{and}  \ \ \ \ \ 
T\not\subset \text{supp}(G_i) \text{, if } m'<i\leq m
\]
This means that $\phi $ maps the submodule $\oplus_{i=1}^{n'}RF_i \subset 
\oplus_{i=1}^{n}RF_i$ to the submodule $\oplus_{j=1}^{m'}RG_j \subset 
\oplus_{j=1}^{m}RG_j$. Furthermore the restriction of $\phi\colon \oplus_{i=1}^{n'}RF_i\to \oplus_{j=1}^{m'}RG_j$ is an isomorphism whose inverse is given by the restriction of $\psi$.    We therefore get that their colimits $\oplus_{i=1}^{n'} R$ and $\oplus_{j=1}^{m'} R$ are also isomorphic and hence $n'=m'$. Moreover, 
by taking the quotients, we obtain an isomorphism between $\oplus_{i>n'}^{n}RF_i$ and
$\oplus_{j>n'}^{m}RG_j$. The proposition now follows from the inductive assumption.
 \end{proof}
 
 The above proposition can be restated in the form:
 \begin{cor}\label{cor isotypeofmultofsets}\hspace{1mm}
 \begin{enumerate}
 \item Let $\{I_i\}_{1\leq i\leq n}$ and $\{J_j\}_{1\leq j\leq m}$ be monomial ideals
 in $R[x_1\ldots,x_r]$. If the $\N^r$-graded $R[x_1,\ldots, x_r]$ modules $\oplus_{i=1}^n I_i$ and 
 $\oplus_{j=1}^m J_j$  are isomorphic, then $m=n$ and 
 there is a permutation $\sigma$  of $\{1,\ldots, n\}$ for which $I_i=J_{\sigma(i)}$.
 \item
 Let $F,G\colon\N^r\to \text{\rm Sets}$ be compact multifiltrations (see~\ref{point posetN}).  Then 
 $F$ and $G$ are isomorphic if and only if their $R$-spans $RF,RG\colon\N^r\to R\text{\rm -Mod}$
 are isomorphic.
 \end{enumerate}
 \end{cor}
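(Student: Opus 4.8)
The plan is to derive the Corollary directly from Proposition~\ref{prop sepciamultifltofmod}, since both parts are essentially reformulations. For part (1), I would observe that by Corollary~\ref{cor posetofideals} each monomial ideal $I_i$ is the $R$-span $\boldsymbol{RF_i}$ of an indecomposable multifiltration $F_i$ of sets, and similarly $J_j=\boldsymbol{RG_j}$. Under the identification of $\text{Fun}(\N^r,R\text{-Mod})$ with $R[x_1,\ldots,x_r]\text{-\bf Mod}$ from~\ref{point identfunRmod}, an isomorphism $\oplus_{i=1}^n I_i\cong\oplus_{j=1}^m J_j$ of graded modules is exactly an isomorphism $\oplus_{i=1}^n RF_i\cong\oplus_{j=1}^m RG_j$ of functors. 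Proposition~\ref{prop sepciamultifltofmod} then yields $n=m$ and a permutation $\sigma$ with $F_i\cong G_{\sigma(i)}$. By Proposition~\ref{prop indmultfilt} isomorphic indecomposable multifiltrations have the same support, hence the same minimal generators, so the associated monomial ideals coincide: $I_i=J_{\sigma(i)}$, not merely isomorphic. This last identification is the one genuinely extra point beyond Proposition~\ref{prop sepciamultifltofmod}.

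For part (2), the forward direction is trivial since the $R$-span functor is a functor, so an isomorphism $F\cong G$ induces $RF\cong RG$. For the converse I would use unique decomposition. By Corollary~\ref{unique decomposition} we write $F=\coprod_{x\in\text{colim}\, F}F[x]$ and $G=\coprod_{y\in\text{colim}\, G}G[y]$ as coproducts of indecomposable multifiltrations. Compactness (see~\ref{point posetN}) means $\text{colim}\,F$ and $\text{colim}\,G$ are finite sets, so these are finite coproducts. Since the $R$-span functor commutes with coproducts (see~\ref{point functors}), applying it gives $RF\cong\oplus_{x}RF[x]$ and $RG\cong\oplus_y RG[y]$ as finite direct sums of $R$-spans of indecomposable multifiltrations of sets. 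An isomorphism $RF\cong RG$ is therefore precisely the hypothesis of Proposition~\ref{prop sepciamultifltofmod}, which furnishes a bijection $\phi\colon\text{colim}\,F\to\text{colim}\,G$ with $F[x]\cong G[\phi(x)]$ for each $x$.

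It then remains to assemble these componentwise isomorphisms into a single isomorphism $F\cong G$. This is immediate: since coproducts are functorial, the family of isomorphisms $F[x]\cong G[\phi(x)]$ indexed by the bijection $\phi$ induces an isomorphism of coproducts $\coprod_x F[x]\cong\coprod_{y}G[y]$, which is the desired $F\cong G$. I do not anticipate a serious obstacle here; the entire corollary is bookkeeping once Proposition~\ref{prop sepciamultifltofmod} is in hand. The one place requiring a little care is the promotion in part (1) from ``isomorphic monomial ideals'' to ``equal monomial ideals,'' which relies on the fact from~\ref{prop indmultfilt} and~\ref{prop genofindmultfilt} that an indecomposable multifiltration is determined up to isomorphism by its support, and that the support determines the monomial ideal on the nose via its minimal generators. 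Everything else is a direct translation across the equivalence of categories and the decomposition results already established.
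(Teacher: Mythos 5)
Your proposal is correct and follows exactly the route the paper intends: the paper offers no separate proof, presenting the corollary as a direct restatement of Proposition~\ref{prop sepciamultifltofmod}, and your argument supplies precisely the implicit bookkeeping (the dictionary between monomial ideals and $R$-spans of indecomposable multifiltrations via Corollary~\ref{cor posetofideals}, equality of ideals from equality of supports via~\ref{prop indmultfilt}, and the finite unique decomposition from~\ref{unique decomposition} plus compactness in part (2)). No gaps.
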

 
 The statement~\ref{cor isotypeofmultofsets}.(2) is not true 
if the  functors $F$ and $G$  are not multifiltrations:
   \begin{example}\label{notfromsets}
Let $F_1,F_2\colon \N\to \text{Sets}$ be functors with the same values  $F_1(0)=F_2(0)=\{a,b,c,d\}$,
$F_1(1)=F_2(1)=\{e,f\}$ and $F_1(n)=F_2(n)=\{g\}$ for $n\geq 2$, however with different maps which are given by the following diagrams: 
\[\xymatrix@R=5pt@C=20pt{
F_1(0)\rto & F_1(1)\rto & F_1(2)\\
a\ar@{|->}[dr] \\
b\ar@{|->}[r]  & e\ar@{|->}[r]  & g\\
c\ar@{|->}[r]  & f\ar@{|->}[ur] \\
d\ar@{|->}[ur] 
}\hspace{2cm}
\xymatrix@R=5pt@C=20pt{
F_2(0)\rto & F_2(1)\rto & F_2(2)\\
a\ar@{|->}[dr] \\
b\ar@{|->}[r]  & e\ar@{|->}[r]  & g\\
c\ar@{|->}[ur]  & f\ar@{|->}[ur] \\
d\ar@{|->}[ur] 
}\]
Although the functors $F_1$ and $F_2$ are not isomorphic, their $R$-spans $RF_1$ and $RF_2$ are.
\end{example}

The following example illustrates the fact that not  all (indecomposable) multifiltrations of $R$-modules are $R$-spans of (indecomposable) multifiltrations of sets.  

\begin{example}
 Consider the  
 multifiltration  $F:\N^2\rightarrow R\text {-Mod}$ which on the square $\{ v\leq (2,2)\}\subset \N^2$ is given by the following commutative  diagram:
 \[\xymatrix{
 R\rto^-{\alpha} & R\oplus R \rto^-{\text{id}} & R\oplus R\\
 0\uto\rto & R\uto_-{\beta} \rto^-{\beta} & R\oplus R\uto_{\text{id}}\\
 0\rto\uto & 0\uto\rto & R\uto_-{\gamma}
 }\]
 and for $w$ in $\N^2\setminus \{ v\leq (2,2)\}$, the map
 $F(\text{min}(w,(2,2))\leq w)$ is an isomorphism.
 Assume further that $\alpha$, $\beta$, and $\gamma$ are monomorphisms and their  images  are pairwise  different submodules of $R\oplus R$. Then this functor is not isomorphic to the  $R$-span of any functor with values in $\text{Sets}$.
 Note further that in this case  $F$ is an indecomposable  multifiltration of $R$-modules whose colimit is free of rank $2$ (compare with~\ref{prop indecomp}).
 \end{example}

Being one critical (see~\ref{point posetN}) for multifiltrations of sets is equivalent to being free (see~\ref{prop coneset}). This is not true
for multifiltrations of $R$-modules if $r>2$:
\begin{example}
Consider the multi filtration $F\colon\N^3\to R\text{-Mod}$ which on the cube $\{v\leq (1,1,1)\}\subset \N^3$ is given by the following commutative  diagram:
\[\xymatrix@C=15pt@R=15pt{
& R^2\rrto^-{\alpha} & & R^4\\
0\rrto\urto & & R^2\urto^(.4){\beta}\\
& 0\uuto|\hole\rrto|\hole & & R^2\uuto_-{\gamma}\\
0\uuto\rrto\urto & & 0\uuto\urto
}\]
and  for $w$ in $\N^3\setminus \{v\leq (1,1,1)\}$ the map
$F(\text{min}(w,(1,1,1))\leq w)$
is an isomorphism.
Then this functor is one critical, 
 it is not free,  and it is not  the $R$-span of a multiflitration of sets. 
\end{example}
For bifiltrations ($r=2$) we have the following positive result:

\begin{prop}
Assume $R$ is a field. 
A bifiltration $F\colon\N^2\to R\text{\rm -Mod}$ is free if and only  if it is one critical.
\end{prop}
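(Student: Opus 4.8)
The plan is to prove the two implications separately, the forward one being formal and the reverse one carrying all the content. Throughout I would identify a multifiltration of vector spaces with an increasing family of subspaces. Since $R$ is a field and every structure map is a monomorphism, the directed (lattice) colimit $M:=\text{colim}\,F$ has each $p_v\colon F(v)\to M$ injective, because an element of $F(v)$ dying in $M$ already becomes $0$ under some injective $F(v\leq u)$ and hence is $0$; this is the $R\text{-Mod}$ analogue of~\ref{prop multimono}. So I may set $M_v:=p_v(F(v))\subseteq M$ and view $F$ as the data of subspaces $M_v\subseteq M_w$ for $v\leq w$ with $\bigcup_v M_v=M$. When $F$ is one critical (see~\ref{point posetN}) each $x\in M$ lies in a unique smallest $M_{v_x}$, so that $x\in M_u$ if and only if $u\geq v_x$.

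For free $\Rightarrow$ one critical (which in fact holds for every $r$ and every $R$): if $F\cong\bigoplus_{t}R\text{mor}_{\N^r}(v_t,-)$ then $M=\bigoplus_t Rg_t$ with $g_t$ in degree $v_t$, and $\text{im}(p_u)=\bigoplus_{t:\,u\geq v_t}Rg_t$. For $x=\sum_t\lambda_t g_t$ the set $\{u:x\in\text{im}(p_u)\}$ equals $\{u:u\geq v_t\text{ whenever }\lambda_t\neq 0\}$, which by the lattice property of $\N^r$ is $\{u:u\geq\text{max}\{v_t:\lambda_t\neq 0\}\}$ and so has unique minimal element $\text{max}\{v_t:\lambda_t\neq 0\}$. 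Hence $F$ is one critical.

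For one critical $\Rightarrow$ free I would construct a basis of $M$ adapted to the filtration. For each $v$ put $Q_v:=M_v/(M_{v-e_1}+M_{v-e_2})$, with $M_{v-e_i}:=0$ when $v-e_i\notin\N^2$; this is the degree-$v$ part of $\boldsymbol F/(x_1,x_2)\boldsymbol F$. Choose for every $v$ vectors $x^v_1,x^v_2,\dots\in M_v$ lifting a basis of $Q_v$. One-criticality forces each chosen generator to have critical coordinate exactly $v$: any $x\in M_v$ with $v_x\neq v$ satisfies $v_x\leq v-e_1$ or $v_x\leq v-e_2$, hence $x\in M_{v-e_1}+M_{v-e_2}$ and dies in $Q_v$. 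I then claim that for every $w$ the set $B_w:=\{x^v_i:v\leq w\}$ is a basis of $M_w$; granting this, sending the generator of the $(v,i)$-th summand of $\bigoplus_{v,i}R\text{mor}_{\N^2}(v,-)$ to $x^v_i$ defines a natural transformation which is a levelwise isomorphism, so $F$ is free. That $B_w$ spans $M_w$ is a routine Noetherian induction on $w$ from $M_w=(M_{w-e_1}+M_{w-e_2})+\text{span}\{x^w_i\}$ together with $\{v\leq w\}=\{v\leq w-e_1\}\cup\{v\leq w-e_2\}\cup\{w\}$, and needs neither that $R$ is a field nor that $r=2$.

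The whole difficulty is the linear independence of $B_w$, and this is exactly where one-criticality and $r=2$ enter. I would argue by Noetherian induction: suppose $w$ is minimal with $B_w$ dependent, group a nontrivial relation by degree as $\sum_{v\in V}y_v=0$ with $y_v\in M_v$ nonzero, and note as before that each such $y_v$ maps to a nonzero class in $Q_v$, hence has critical coordinate exactly $v$. Minimality forces $\text{max}\,V=w$. If $w\in V$ then $y_w$ equals a sum of the remaining $y_v$, each lying in $M_{w-e_1}$ or $M_{w-e_2}$ since $v\leq w$ and $v\neq w$; thus $y_w\in M_{w-e_1}+M_{w-e_2}$, contradicting $v_{y_w}=w$. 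So $w\notin V$ (which in turn forces $w^1,w^2\geq 1$), and then the $v\in V$ not below $w-e_1$ are precisely those with $v^1=w^1$, and, $w$ being absent, they all satisfy $v\leq w-e_2$. Writing $z'$ for the sum of the $y_v$ with $v\leq w-e_1$ and $z''$ for the remaining terms, we have $z'+z''=0$, so $z''\in M_{w-e_2}$ and $z''=-z'\in M_{w-e_1}$; here one-criticality supplies the key identity $M_{w-e_1}\cap M_{w-e_2}=M_{w-e_1-e_2}$ (if $x$ lies in both then $v_x\leq w-e_1$ and $v_x\leq w-e_2$, hence $v_x\leq w-e_1-e_2$), giving $z''\in M_{w-e_1-e_2}$. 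By the induction hypothesis $B_{w-e_1-e_2}$ is a basis, so rewriting $z''$ in it and subtracting yields a dependence among $B_{w-e_2}$, which is trivial by minimality since $w-e_2<w$. But the degrees in its two halves are disjoint ($v^1=w^1$ versus $v^1<w^1$), so triviality makes every $y_v$ with $v^1=w^1$ vanish, contradicting $\text{max}\,V=w$. The only serious obstacle is arranging this last step so the binary coordinate split is used exactly where $r=2$ is indispensable: for $r\geq 3$ an element failing $v\leq w-e_1$ need not satisfy $v\leq w-e_2$, which is precisely why the statement fails in higher dimensions.
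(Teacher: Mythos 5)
Your proof is correct, and for the substantive direction (one critical $\Rightarrow$ free) it takes a genuinely different route from the paper, even though both hinge on the same key consequence of one-criticality: $M_{w-e_1}\cap M_{w-e_2}=M_{w-e_1-e_2}$, which is exactly where $r=2$ is indispensable (the paper states this as the injectivity of $\text{colim}\bigl(F(n,k-1)\hookleftarrow F(n-1,k-1)\hookrightarrow F(n-1,k)\bigr)\to F(n,k)$ and remarks that ``the assumption $r=2$ is crucial''). The paper reduces the module statement to its set-level theory: it sweeps $\N^2$ inductively (bottom row first, then row by row, left to right), at each step extending the union of the previously constructed bases --- which stays linearly independent in $F(n,k)$ precisely by that intersection identity --- to a basis of $F(n,k)$; this exhibits $F$ as the $R$-span of a one-critical multifiltration of sets, and freeness then follows by citing Proposition~\ref{prop coneset}. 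You never pass through set-valued functors: you choose graded-Nakayama-style minimal generators, i.e.\ lifts $x^v_i$ of bases of the degree-$v$ pieces of $\boldsymbol{F}/(x_1,x_2)\boldsymbol{F}$, and prove directly, by a minimal-counterexample induction over the poset, that $B_w=\{x^v_i\colon v\le w\}$ is a basis of $M_w$ for every $w$; your independence step --- splitting a putative relation into the degrees with $v_1=w_1$ versus those with $v\le w-e_1$, then playing the intersection identity against independence at $w-e_2$ and $w-e_1-e_2$ --- is sound (both cases $w\in V$ and $w\notin V$ work, and nonemptiness of the $v_1=w_1$ block follows from $\max V=w$). The forward implication is essentially identical in both. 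As for what each approach buys: the paper's route reuses Proposition~\ref{prop coneset} and fits its theme of recognizing which module multifiltrations arise from sets, though it quietly relies on one-criticality of $RG$ descending to $G$; your route is self-contained linear algebra, identifies the generating set canonically rather than through arbitrary successive basis extensions, and isolates transparently the single step that breaks for $r\ge 3$, matching the paper's three-dimensional counterexample.
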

\begin{proof}
One implication holds more generally for all $r$.
If $F\colon\N^r\to R\text{\rm -Mod}$ is free, it is the $R$-span of a free functor $G: \N^r \rightarrow \text{Sets}$.
Thus  $F$ Êis isomorphic to $\bigoplus_{x\in \text{colim}\, G} R\text{mor}(v_x,-)$.
Since the $R$-span functor commutes with colimits, we can identify $\text{colim}\, F$ with $R(\text{colim}\, G)$.
Consider  an element $y=\sum_{i=1}^{n} c_i x_i$ in $\text{colim}\, F$ where $x_i$ belongs to $\text{colim}\, G$. 
Note that:
\[\{v\in \N^2\ |\ y\in F(v)\}=\bigcap_{i=1}^{n}\{v\in \N^2\ |\ x_i\in G(v)\}\] 
It follows that this set has a unique minimal
element given by $\text{max}\{v_{x_i}\ |\ 1\leq i\leq n\}$. This shows that $F$Ê is one critical.

Assume now that $F\colon\N^2\to R\text{\rm -Mod}$ is one critical. To show that it is free it would be enough to prove that it is the $R$-span of a multifiltration of sets since in this case this multifiltration of sets 
would be also one critical and therefore free by~\ref{prop coneset}.  
Define $G(0,0)$ to be a base of $F(0,0)$.  Since $F((0,0)\leq (1,0))\colon F(0,0)\to F(1,0)$ is an inclusion, we can  extend that base of $F(0,0)$ to a base $G(1,0)$
of $F(1,0)$. We can proceed by induction on $n$ and define in this way a sequence of sets
\[G(0,0)\subset G(1,0)\subset\cdots G(n,0)\subset\cdots\] whose $R$-span gives the functor $F$ restricted to
$\N\times \{0\}\subset \N^2$. We continue again by induction. Assume that $k>1$ and   we have constructed a functor: 
\[G\colon \N\times \{v\in \N\ |\ v<k\}\to \text{Sets}\] whose $R$-span  is isomorphic to the restriction  of $F$. By the same argument as before, since $F((0,k-1)\leq (0,k))\colon F(0,k-1)\to F(0,k)$ is an inclusion
we can extend the base $G(0,k-1)$ of $F(0,k-1)$ to a base $G(0,k)$ of $F(0,k)$. Assume $n>0$ and that we have defined a functor:
\[G\colon  \N\times \{v\in \N\ |\ v<k\}\cup  \{v\in \N\ |\ v<n\}\times \{v\in \N\ |\ v\leq k\}\to \text{Sets}\]
whose $R$-span is isomorphic to the restriction of $F$.
Since $F$ is one critical the intersection of the images of $F(n-1,k)$ and $F(n,k-1)$ in $F(n,k)$ coincide
with the image of $F(n-1,k-1)$. It follows that the induced map:
\[\text{colim}(F(n,k-1)\hookleftarrow F(n-1,k-1)\hookrightarrow
F(n-1,k))\to F(n,k)\] 
is an inclusion. Here the assumption $r=2$ is crucial.
We can then extend the subset:
\[\text{colim}(G(n,k-1)\hookleftarrow G(n-1,k-1)\hookrightarrow
G(n-1,k))\hookrightarrow F(n,k)\] 
to a base $G(n,k)$ of $F(n,k)$. In this way we get a desired functor 
\[G\colon \N\times \{v\in \N\ |\ v\leq k\}\to \text{Sets}\]
whose $R$-span is isomorphic to $F$.
\end{proof}

We finish this section with  a procedure of obtaining a free presentation of the $\N^r$-graded $R[x_1,\ldots,x_r]$-module $\boldsymbol{RF}$ associated to the $R$-span of a  multifiltration
$F\colon\N^r\to \text{Sets}$.
In the first 3 steps we recall from the end of Section~\ref{sec funsets} how to build a presentation of $F$.
\begin{itemize}
\item Decompose $F$ into indecomposable components $\coprod_{x\in \text{colim}\, F} F[x]$.
\item For any $x$, find the set $T_x:=\text{gen}(\text{supp}(F[x]))$.
\item  Recall thatÊ $F$ can be described as the coequalizer of two natural transformations
  $\pi_0,\pi_1\colon\mathcal{K}F\to\mathcal{G}F$ between free functors. Explicitly $F$ is isomorphic to the colimit of  the following diagram:
\[
\coprod_{x\in \text{colim}\, F} \left(
 \vcenter{\vbox{\xymatrix{
\displaystyle{\coprod_{v_0\not=  v_1\in T_x} \text{mor}_{\N^r}(\text{max}\{v_0,v_1\},-)}
\ar@<1.2ex>[rr]^-{\pi_0[x]}\ar@<-1.2ex>[rr]_-{\pi_1[x]} &&
\displaystyle{\coprod_{v\in T_x} \text{mor}_{\N^r}(v,-)}} 
}} \right)
\]
where on the component indexed by $v_0\not=v_1\in  T_x$, the  map $\pi_i$, 
is given by the unique natural transformation $\text{mor}_{\N^r}(\text{max}\{v_0,v_1\},-)\to \text{mor}_{\N^r}(v_i,-)$
induced by $v_i\leq \text{max}\{v_0,v_1\}$.
\item Since the $R$-span functor commutes with colimits, we get that the module  $\boldsymbol{RF}$ is isomorphic to
the  coequalizer of the following two maps $\boldsymbol{\pi_0}$ and $\boldsymbol{\pi_1}$ between free $\N^r$-graded $R[x_1,\ldots,x_r]$-modules (see~\ref{point polynomials}):
\[
\bigoplus_{x\in \text{colim}\, F} \left(\xymatrix{\displaystyle{\bigoplus_{v_0\not=  v_1\in T_x} <x^{\text{max}\{v_0,v_1\}}>}
\ar@<1.2ex>[rr]^-{\boldsymbol{\pi_0[x]}}\ar@<-1.2ex>[rr]_-{\boldsymbol{\pi_1[x]}} &&
\displaystyle{\bigoplus_{v\in T_x} <x^v>}}  \right)
\]
where  $\boldsymbol{\pi_i[x]}$, on the component indexed by $v_0\not=v_1\in  T_x$,  is    given by the inclusion
$<x^{\text{max}\{v_0,v_1\}}>\hookrightarrow  <x^{v_i}>$. Thus the columns of the matrix representing $\boldsymbol{\pi_i[x]}$  have all  entries zero except one which is one.
\item   The  module   $\boldsymbol{RF}$ is then isomorphic to the cokernel of the difference $\boldsymbol{\pi_0}-\boldsymbol{\pi_1}$.
Note that the columns of the matrix $M(F_x)$ representing $\boldsymbol{\pi_0}-\boldsymbol{\pi_1}$  are vectors of the  form: one entry is $1$, one entry is $-1$, and all other entires are zero. 
\end{itemize}

To summarize,
with a multifiltration $F\colon\N^r\to \text{Sets}$ we have associated the following invariants:
\begin{enumerate}
\item a set $\text{colim}\, F$;
\item for any $x$ in $\text{colim}\, F$, a finite subset $T_x:=\text{gen}(\text{supp}(F[x]))$ of $\N^r$;
\item  for any $x$ in $\text{colim}\, F$, 
 a $|T_x|\times{ |T_x|\choose 2}$ matrix  $M(F_x)$, representing the map $\boldsymbol{\pi_0[x]}-\boldsymbol{\pi_1[x]}$ whose columns are vectors of the  form: one entry is $1$, one entry is $-1$, and all other entires are zero. 
\end{enumerate}
These invariants can be used to get the $\N^r$-graded $R[x_1,\ldots,x_r]$-module associated to  the $R$-span $RF$ as the cokernel of the map:
\[\bigoplus_{x\in \text{colim}\, F}\left(
\bigoplus_{v_0\not=  v_1\in T_x} <x^{\text{max}\{v_0,v_1\}}>
\xrightarrow{M(F_x)}
\bigoplus_{v\in T_x} \langle x^v \rangle
\right)\]

\section{Functors with values in $\text{\rm Spaces}$}\label{sec funspaces}
Let $F:\N^r\rightarrow \text{\rm Spaces}$ be a  multifiltration of simplicial complexes,
$X:=\text{colim}\, F$, and $R$ a commutative ring with identity.   Let us choose an ordering on the set of vertices of $X$. 
Since $F$ is a multifiltration, we can restrict this ordering to the set of vertices of $F(v)$, for any $v$ in $\N^r$. In this way the maps $F(v\leq w)$ are order preserving and we can  form a  functor of ordered $n$-simplices to get a multifiltration of sets
$F_n\colon\N^r\to \text{Sets}$ (see~\ref{point homology}) which assigns to any $v$ in $\N^r$ the set $F(v)_n$ of ordered $n$-simplices in $F(v)$. 
These functors, for various $n$'s, are connected via the natural transformations
given by the maps
$d_i: F_{n+1}(v)\rightarrow F_{n}(v)$ which forget the $i$-th element of an ordered simplex (see~\ref{point homology}).
By applying the $R$-span functor and taking the alternating sum of the induced maps we obtain a diagram of natural transformations in $\text{Fun}(\N^r,R\text{-Mod})$:
\[RF_{n+1}\xrightarrow{\partial_{n+1}:=\sum_{i=0}^{n+1}(-1)^i d_i} RF_n
\xrightarrow{\partial_{n}:=\sum_{i=0}^{n}(-1)^i d_i} RF_{n-1}\]
The composition of these maps is trivial and hence we can form a homology functor
$H_n(F,R)\colon \N^r\to R\text{-Mod}$ which in general may not be a  multifiltration.
This could be done in two stages. First we could take the cokernel
of the first differential $\text{coker}(\partial_{n+1}\colon RF_{n+1}\to RF_{n})$
and then the kernel of the induced map $\partial_n\colon \text{coker}(\partial_{n+1})\to 
RF_{n-1}$ or we could take the kernel of the second differential $\text{ker}(\partial_{n}\colon RF_{n}\to RF_{n-1})$ and then the cokernel of the induced map
$\partial_{n+1}\colon RF_{n+1}\to \text{ker}(\partial_{n})$.
Let us consider  the case of $n=0$. Recall that since  $RF_{-1} $ is assumed to be the trivial functor (see~\ref{point homology}), $H_0(F,R)$ is given
by the cokernel $\text{coker}(d_0-d_1\colon RF_{1}\to RF_0 )$. This cokernel is simply the coequalizer
of the two maps $d_0,d_1\colon RF_{1}\to RF_0 $. As the $R$-span functor commutes with colimits,
we then get an isomorphism between $H_0(F,R)$ and the $R$-span of the following functor with values in the category of sets:  
\[ \text{colim}\left(
\xymatrix{
F_1\ar@<1.2ex>[r]^-{d_0}\ar@<-1.2ex>[r]_-{d_1}  & F_0}
\right)
\]
This is a special property of the $0$-th homology.
If  $n\geq 1$, then  it is not true  in general that    the functors $H_n(F,R)$,
$\text{coker}(\partial_{n+1}\colon RF_{n+1}\to RF_{n})$, and $\text{ker}(\partial_{n}\colon RF_{n}\to RF_{n-1})$ are $R$-spans of functors with values in the category of sets, even
 if $R$ is  a field as the following example illustrates:

\begin{example}\label{multifiltrations example}
Consider the two multifiltrations of spaces $F,G:\N^2\rightarrow \text{Spaces}$ which on the square $\{ v\leq (2,2)\}\subset \N^2$ are described in Figure \ref{multifiltrations} and 
for $w$ in $\N^2\setminus \{ v\leq (2,2)\}$, the  maps induced by  $(\text{min}\{w_1,2\},\text{min}\{w_2,2\})\leq w$ are the identities.

\begin{figure}[ht]
        \centering
        \begin{subfigure}[b]{0.3\textwidth}
                \includegraphics[width=\textwidth]{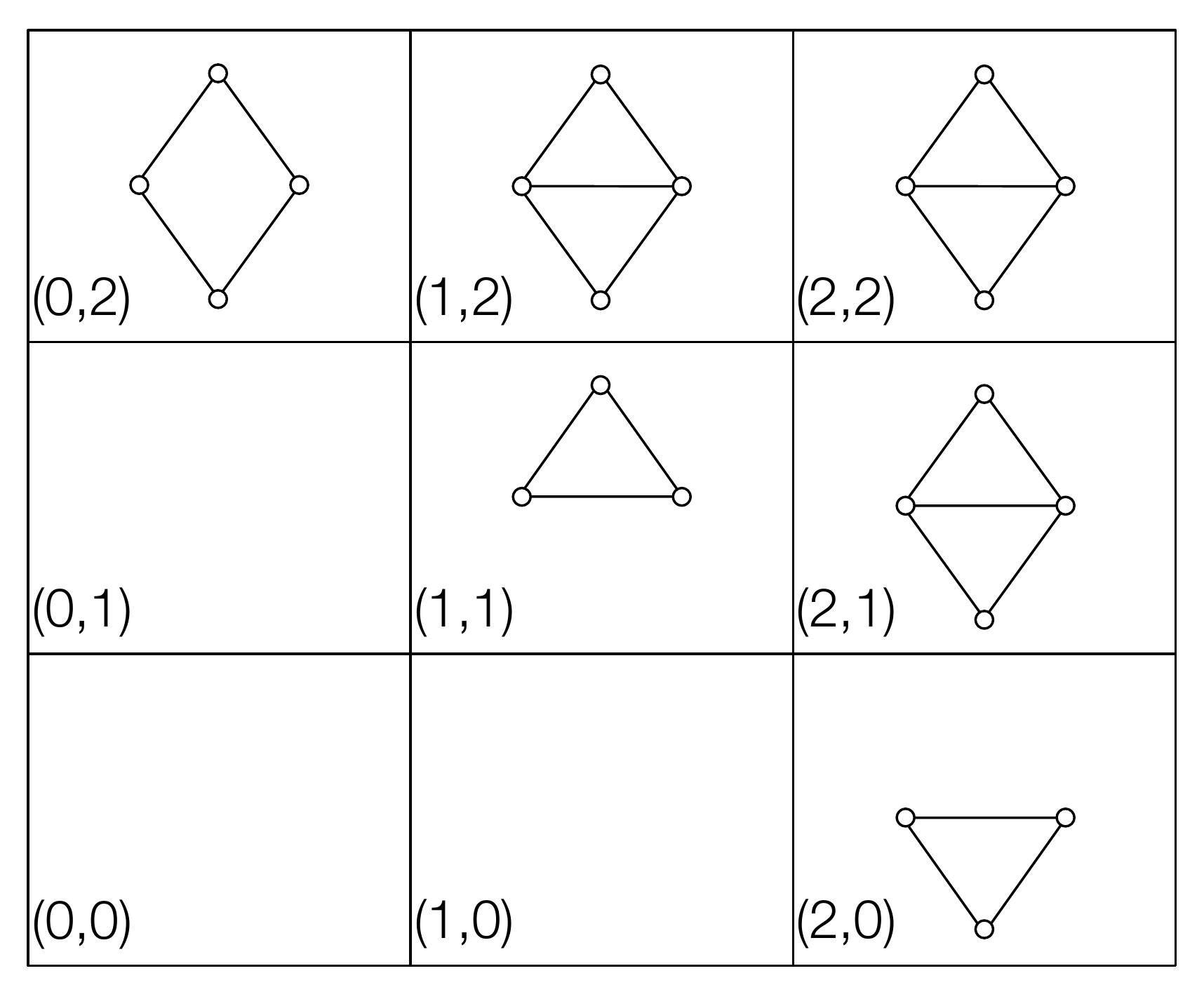}
                \caption*{$F:\N^r \rightarrow Spaces$}
                \label{ker}
        \end{subfigure}%
        \quad
        \quad
        \begin{subfigure}[b]{0.3\textwidth}
                \includegraphics[width=\textwidth]{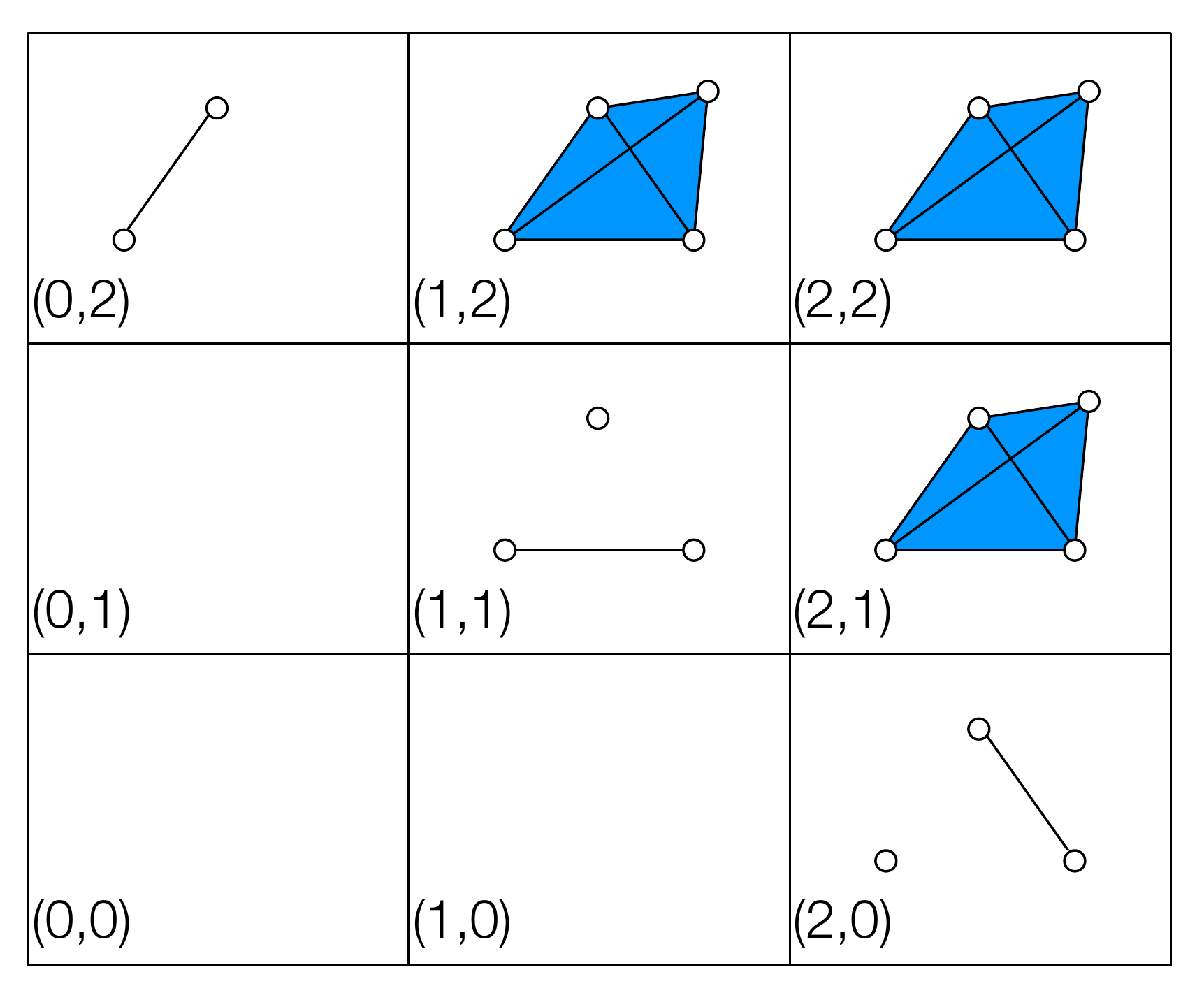}
                \caption*{$G:\N^r \rightarrow Spaces$}
                \label{coker}
        \end{subfigure}
        \caption{multifiltrations  with values in Spaces}\label{multifiltrations}
\end{figure}
In the multifiltration $F:\N^r \rightarrow \text{Spaces}$ there are no $2$-simplices and hence  $H_1(F,R) = \text{ker}(\partial_{1}\colon RF_{1}\to RF_{0})$.
On the square $\{ v\leq (2,2)\}\subset \N^2$, the functors $\text{ker}(\partial_{1}\colon RF_{1}\to RF_{0})$ and $\text{coker}(\partial_{2}\colon RG_{2}\to RG_{1})$ are given  respectively  by the diagrams:
 \[\xymatrix{
 R\rto^-{{\tiny
 \begin{pmatrix}
 1\\
 1
 \end{pmatrix}
 }}
  & R\oplus R \rto^-{\text{id}} & R\oplus R\\
 0\uto\rto & R\uto^-{{\tiny
 \begin{pmatrix}
 1\\
 0
 \end{pmatrix}
 }} \rto^-{{\tiny
 \begin{pmatrix}
 1\\
 0
 \end{pmatrix}
 }} & R\oplus R\uto_{\text{id}}\\
 0\rto\uto & 0\uto\rto & R\uto_-{{\tiny
 \begin{pmatrix}
 0\\
 1
 \end{pmatrix} \quad,\quad
 }}
 }
 \xymatrix{
 R\rto^-{{\tiny
 \begin{pmatrix}
 1\\
 0
 \end{pmatrix}
 }} & R\oplus R \rto^-{\text{id}} & R\oplus R\\
 0\uto\rto & R\uto^-{{\tiny
 \begin{pmatrix}
 0\\
 1
 \end{pmatrix}
 }} \rto^-{{\tiny
 \begin{pmatrix}
 0\\
 1
 \end{pmatrix}
 }} & R\oplus R\uto_{\text{id}}\\
 0\rto\uto & 0\uto\rto & R\uto_-{{\tiny
 \begin{pmatrix}
 -1\\
 -1
 \end{pmatrix}
 }}
 }
 \]
By Example \ref{notfromsets} both of the functors are not the $R$-span of any multifiltration of sets. 
\end{example}

We now assume that $F:\N^r \rightarrow \text{Spaces}$ is a compact multifiltration 
of spaces. It follows that $X=\text{colim}\, F$ is a finite complex. Since in general the functor $H_n(F,R)$ is not the $R$-span of a multifiltration of sets we cannot directly use the construction in Section~\ref{sec funmod} to compute a free presentation of the module $\boldsymbol{H_n(F,R)}$.  Instead our goal is to describe the module $\boldsymbol{H_n(F,R)}$ in such a way
that one can use very efficiently standard commutative algebra software or an   algorithm presented in~\cite{Multi2} which often is faster. As it was pointed out in~\cite{Multi2} this efficiency is a consequence of homogeneity and the fact that matrices involved are very simple. We proceed as follows:
\begin{enumerate}
\item  Consider  the decomposition $F_{n-1}=\coprod_{\sigma\in X_{n-1}}F_{n-1}[\sigma]$
(see~\ref{unique decomposition}). Define  $D_{n-1}:=\coprod_{\sigma\in X_{n-1}}\text{mor}_{\N^r}(0,-)$ and
  $\phi\colon F_{n-1}\to D_{n-1}$ to  be the coproduct of the 
unique inclusions $\coprod_{\sigma\in X_{n-1}}(F_{n-1}[\sigma]\hookrightarrow \text{mor}_{\N^r}(0,-))$. Note that $D_{n-1}$ is a free functor.
\item  Information about $F$ together with  the presentations and natural transformations given at the end of
Section~\ref{sec funsets} and in step ($1$) can be organized  into  the following commutative diagrams 
for any $0\leq i\leq n+1$ and $0\leq j\leq n$:
\[\xymatrix@C=35pt@R=30pt{
&\mathcal{G}F_{n+1}\rto^{p_{F_{n+1}}}\dto_{\overline{d_i}}& 
F_{n+1}\dto^{d_i}\\
\mathcal{K}F_n \ar@<0.7ex>[r]^-{\pi_0}\ar@<-.7ex>[r]_-{\pi_1}& \mathcal{G}F_{n}
\dto_{\alpha_j}
\rto^{p_{F_{n}}}
& 
F_{n}\dto^{d_j}\\
&D_{n-1} & F_{n-1}\ar@{_(->}[l]_-{\phi}
}\]
\item This leads to the following natural transformations:
\[\xymatrix@C=35pt@R=30pt{
 & \mathcal{G}F_{n+1}\ar@<1.7ex>[d]^-{\overline{d_{n+1}}}
\ar@{}@<0ex>[d]|-{\cdots}  \ar@<-1.7ex>[d]_-{\overline{d_{0}}} \\
\mathcal{K}F_n \ar@<0.7ex>[r]^-{\pi_0}\ar@<-.7ex>[r]_-{\pi_1}& \mathcal{G}F_{n}
\ar@<1.7ex>[d]^-{\alpha_{n}}
\ar@{}@<0ex>[d]|-{\cdots}  \ar@<-1.7ex>[d]_-{\alpha_{0}} \\
&   D_{n-1}
}\]
\item By applying the $R$-span functor and 
additivity we get two homomorphisms of $\N^r$-graded free $R[x_1,\ldots,x_r]$-modules:
\[\xymatrix@C=45pt{
\boldsymbol{R\mathcal{K}F_n}\oplus\boldsymbol{R\mathcal{G}F_{n+1}}\rrto^-{\left[ \boldsymbol{\pi_0}-\boldsymbol{\pi_1}\ 
\sum_{i=0}^{n+1}(-1)^i\boldsymbol{\overline{d_i}}\right]} & &\boldsymbol{R\mathcal{G}F_{n}}
\rrto^-{\sum_{j=0}^{n}(-1)^j \boldsymbol{\alpha_j}} && \boldsymbol{RD_{n-1}}
}\]
\end{enumerate}

\begin{prop}\label{prop homolgycomplexfree}
The composition of the above homomorphisms is trivial and the  homology of this complex is isomorphic to 
$\boldsymbol{H_n(F,R)}$.
\end{prop}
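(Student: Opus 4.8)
The plan is to reduce everything to the two structural facts already available. First, applying the $R$-span functor (which commutes with colimits) to the coequalizer presentation of $F_n$ from the end of Section~\ref{sec funsets} realizes $\boldsymbol{RF_n}$ as $\text{coker}(\boldsymbol{\pi_0}-\boldsymbol{\pi_1})$; thus the $R$-span $\boldsymbol p:=\boldsymbol{Rp_{F_n}}\colon\boldsymbol{R\mathcal G F_n}\to\boldsymbol{RF_n}$ is surjective with $\text{ker}(\boldsymbol p)=\text{im}(\boldsymbol{\pi_0}-\boldsymbol{\pi_1})$, and likewise $\boldsymbol q:=\boldsymbol{Rp_{F_{n+1}}}$ is surjective. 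Second, the two commuting squares of step (2), after applying $R$-span, give $\boldsymbol p\circ\boldsymbol{\overline{d_i}}=\boldsymbol{Rd_i}\circ\boldsymbol q$ and $\boldsymbol{\alpha_j}=\boldsymbol{R\phi}\circ\boldsymbol{Rd_j}\circ\boldsymbol p$, where $\boldsymbol{R\phi}$ is injective, being the $R$-span of the monomorphism $\phi$. Writing $\boldsymbol\delta:=\sum_i(-1)^i\boldsymbol{\overline{d_i}}$ and $\boldsymbol a:=\sum_j(-1)^j\boldsymbol{\alpha_j}$, these identities sum to $\boldsymbol p\circ\boldsymbol\delta=\boldsymbol{\partial_{n+1}}\circ\boldsymbol q$ and $\boldsymbol a=\boldsymbol{R\phi}\circ\boldsymbol{\partial_n}\circ\boldsymbol p$.

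For the vanishing of the composite $\boldsymbol a\circ[\boldsymbol{\pi_0}-\boldsymbol{\pi_1},\ \boldsymbol\delta]$ I would treat the two matrix blocks separately. On $\boldsymbol{R\mathcal K F_n}$ the composite equals $\boldsymbol{R\phi}\circ\boldsymbol{\partial_n}\circ\boldsymbol p\circ(\boldsymbol{\pi_0}-\boldsymbol{\pi_1})$, which vanishes because $\boldsymbol p$ coequalizes $\boldsymbol{\pi_0}$ and $\boldsymbol{\pi_1}$. On $\boldsymbol{R\mathcal G F_{n+1}}$ it equals $\boldsymbol{R\phi}\circ\boldsymbol{\partial_n}\circ(\boldsymbol p\circ\boldsymbol\delta)=\boldsymbol{R\phi}\circ\boldsymbol{\partial_n}\circ\boldsymbol{\partial_{n+1}}\circ\boldsymbol q$, which vanishes since $\boldsymbol{\partial_n}\circ\boldsymbol{\partial_{n+1}}=0$. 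Hence the sequence is a chain complex.

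To identify the homology I would push everything through the surjection $\boldsymbol p$. Set $Z:=\text{ker}(\boldsymbol a)$ and $B:=\text{im}[\boldsymbol{\pi_0}-\boldsymbol{\pi_1},\boldsymbol\delta]=\text{ker}(\boldsymbol p)+\text{im}(\boldsymbol\delta)$, so the homology is $Z/B$. Because $\boldsymbol{R\phi}$ is injective, $Z=\text{ker}(\boldsymbol{\partial_n}\circ\boldsymbol p)=\boldsymbol p^{-1}(\text{ker}\,\boldsymbol{\partial_n})$, and surjectivity of $\boldsymbol p$ gives $\boldsymbol p(Z)=\text{ker}(\boldsymbol{\partial_n})$. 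Likewise $\boldsymbol p(B)=\boldsymbol p(\text{im}\,\boldsymbol\delta)=\text{im}(\boldsymbol p\circ\boldsymbol\delta)=\text{im}(\boldsymbol{\partial_{n+1}}\circ\boldsymbol q)=\text{im}(\boldsymbol{\partial_{n+1}})$, using that $\boldsymbol q$ is surjective. Since $\text{ker}(\boldsymbol p)=\text{im}(\boldsymbol{\pi_0}-\boldsymbol{\pi_1})\subseteq B$, the surjection $\boldsymbol p$ descends to an isomorphism $Z/B\xrightarrow{\ \sim\ }\boldsymbol p(Z)/\boldsymbol p(B)=\text{ker}(\boldsymbol{\partial_n})/\text{im}(\boldsymbol{\partial_{n+1}})=\boldsymbol{H_n(F,R)}$: injectivity of the induced map is the one-line chase that if $z\in Z$ satisfies $\boldsymbol p(z)=\boldsymbol p(b)$ with $b\in B$, then $z-b\in\text{ker}(\boldsymbol p)\subseteq B$, so $z\in B$.

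The manipulations are essentially formal once the two commuting squares and the coequalizer presentation are in hand, so I do not expect a serious obstacle. The one point requiring care, and the reason the redundant-looking summand $\boldsymbol{R\mathcal K F_n}$ appears in the source of the first map, is the inclusion $\text{ker}(\boldsymbol p)\subseteq B$: it is exactly what forces $\boldsymbol p(B)=\text{im}(\boldsymbol{\partial_{n+1}})$ with no collapse and what makes the induced map on quotients injective rather than merely surjective. The other detail to pin down is that $\boldsymbol{R\phi}$ is a monomorphism, which follows since $\phi$ is the coproduct of the inclusions $F_{n-1}[\sigma]\hookrightarrow\text{mor}_{\N^r}(0,-)$ and the $R$-span of an injective map of sets is injective.
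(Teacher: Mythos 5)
Your proof is correct and follows essentially the same route as the paper's: both reduce to the simplicial chain complex $\boldsymbol{RF_{n+1}}\to\boldsymbol{RF_n}\to\boldsymbol{RF_{n-1}}$ and exploit the same three facts --- $\phi$ is a monomorphism, $p_{F_{n+1}}$ is an epimorphism, and $\boldsymbol{RF_n}=\mathrm{coker}(\boldsymbol{\pi_0}-\boldsymbol{\pi_1})$ --- to transport the homology to the free complex. The only difference is presentational: the paper performs the replacement in two terse steps via a commutative diagram, while you carry out the same diagram chase explicitly through the surjection $\boldsymbol p$, making the role of $\ker(\boldsymbol p)\subseteq B$ visible.
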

\begin{proof}
Consider the complex whose homology is $\boldsymbol{H_n(F,R)}$:
\[\xymatrix{\boldsymbol{RF_{n+1}}\rrto^{\sum_{i=0}^{n+1}(-1)^i\boldsymbol{d_i}}
& & \boldsymbol{RF_{n}}\rrto^{\sum_{i=0}^{n}(-1)^i\boldsymbol{d_i}} & & 
\boldsymbol{RF_{n-1}}
}\]
 Since $\phi\colon F_{n-1}\hookrightarrow D_{n-1}$ is an inclusion and
$p_{F_{n+1}}\colon \mathcal{G}F_{n+1}\to F_{n+1}$ is a surjection,
the bottom row of the following commutative diagram is also a complex whose 
 homology is  isomorphic to $\boldsymbol{H_n(F,R)}$:
\[\xymatrix@C=90pt{
\boldsymbol{R\mathcal{K}F_n}\oplus\boldsymbol{R\mathcal{G}F_{n+1}}
\dto|-{\text{projection}}
\rto^-{\left[ \boldsymbol{\pi_0}-\boldsymbol{\pi_1}\ 
\sum_{i=0}^{n+1}(-1)^i\boldsymbol{\overline{d_i}}\right]}  
&\boldsymbol{R\mathcal{G}F_{n}}\dto^{\boldsymbol{p_{F_n}}}
\rto^-{\sum_{j=0}^{n}(-1)^j \boldsymbol{\alpha_j}} & \boldsymbol{RD_{n-1}}\ar@{=}[d]\\
\boldsymbol{\mathcal{G}F_{n+1}}\rto^{\sum_{i=0}^{n+1}(-1)^i \boldsymbol{d_ip_{F_{n+1}}}}
&  \boldsymbol{RF_{n}}\rto^{\sum_{i=0}^{n}(-1)^i \boldsymbol{\phi d_i}}  & 
\boldsymbol{RD_{n-1}}
}\] 
Recall that   $\boldsymbol{RF_{n}}$ is the cokernel of the map 
$\boldsymbol{\pi_0}-\boldsymbol{\pi_1}\colon \boldsymbol{R\mathcal{K}F_n}\to \boldsymbol{R\mathcal{G}F_{n}}$. This implies the top row of the above diagram is also a complex whose homology is isomorphic to $\boldsymbol{H_n(F,R)}$ proving the  proposition.
\end{proof}

An important fact  is that the above  sequence  of free $\N^r$-graded $R[x_1,\ldots,x_r]$-modules
that computes $\boldsymbol{H_n(F,R)}$ can be easily and explicitly  described in terms of  the original multifiltration of spaces. Here are the involved modules:
\[\boldsymbol{R\mathcal{K}F_n}=\bigoplus_{\sigma\in X_n}\bigoplus_{v_0\not=v_1 \in \text{gen}(\sigma)}
<x^{\text{max}\{v_0,v_1\}}>\]
\[
\boldsymbol{R\mathcal{G}F_{n}}=\bigoplus_{\sigma\in X_n}\bigoplus_{v\in\text{gen}(\sigma)}<x^v>
\]
\[ \boldsymbol{RD_{n-1}}=\bigoplus_{\sigma\in X_{n-1}} R[x_1,\ldots, x_r]\]
and here is how to find the matrices
associated to the maps in this sequence (see~\ref{point polynomials} for our convention to describe homomorphisms between free 
$\N^r$-graded $R[x_1,\ldots,x_r]$-modules).
\begin{itemize}
\item Let  $\sigma $ be a simplex in $X_n$ or $X_{n+1}$. Consider the set 
$\{v\in \N^r\ |\  \sigma\in F(v)\}$. This is a saturated set and hence admits a finite 
minimal set of generators which we denote by $\text{gen}(\sigma)$. 
This set coincides with $\text{gen}(\text{supp}(F[\sigma]))$ and its elements are exactly  the minimal elements of 
the set $\{v\in \N^r\ |\  \sigma\in F(v)\}$.
\item The matrix $\left[ \boldsymbol{\pi_0}-\boldsymbol{\pi_1}\ 
\sum_{i=0}^{n+1}(-1)^i\boldsymbol{\overline{d_i}}\right]$ is  a concatenation of two matrices
one for  $\boldsymbol{\pi_0}-\boldsymbol{\pi_1}$ and one for $\sum_{i=0}^{n+1}(-1)^i\boldsymbol{\overline{d_i}}$. 
\item The matrix for $\boldsymbol{\pi_0}-\boldsymbol{\pi_1}$ is a block diagonal. The blocks are indexed by simplices in $X_{n}$ and the block corresponding to $\sigma$ in $X_{n}$
is of the size $ |\text{gen}(\sigma)|\times {|\text{gen}(\sigma)|\choose 2 }$. 
The entry in this block indexed by $v$ in  $\text{gen}(\sigma)$ and $v_0\not=v_1$ in $\text{gen}(\sigma)\choose 2$
has row grade $v$ and column grade $\text{max}\{v_0,v_1\}$. Its value is $1$ if $v=v_0$, $-1$ if $v=v_1$, and $0$ otherwise. 
\item The rows of the matrix   for $\sum_{i=0}^{n+1}(-1)^i\boldsymbol{\overline{d_i}}$
are indexed in the same way and have the same grades as the rows of the matrix   for $\boldsymbol{\pi_0}-\boldsymbol{\pi_1}$. The columns of the matrix   for $\sum_{i=0}^{n+1}(-1)^i\boldsymbol{\overline{d_i}}$ are divided into blocks indexed by simplices in  $X_{n+1}$. The columns in the block 
corresponding to $\sigma$ in $X_{n+1}$ are indexed by  $\text{gen}(\sigma)$. The corresponding element in  
$\text{gen}(\sigma)$ is the grade of the column.  
Each  column has exactly $n+2$ non-zero entries which are either $1$ or $-1$. For a column indexed by $v$ in  $\text{gen}(\sigma)$, the non-zero entries occur 
in the row blocks corresponding to the simplices $d_i(\sigma)$. In each such block there is only one non-zero entry and is equal to $(-1)^i$ and occurs in the row corresponding to the minimal element with respect to the lexicographical order in the set  $\{w\in \text{gen}(d_i(\sigma))\ | \ w\leq v\}$.

\item The matrix for $\sum_{j=0}^{n}(-1)^j \boldsymbol{\alpha_j}$ has rows indexed by simplices in $X_{n-1}$.
All the rows have grade  $0$. The columns are divided  into blocks indexed by simplices in $X_n$. 
The columns in the block 
corresponding to $\sigma$ in $X_{n}$ are indexed by  $\text{gen}(\sigma)$.
The corresponding element in  
$\text{gen}(\sigma)$ is the grade of the column.  
The entry in this matrix in the row indexed by $\tau$ in $X_{n-1}$ and the column indexed by 
 $v$ in $\text{gen}(\sigma)$ for $\sigma$Ê in $X_{n}$ has value $(-1)^i$ if $\tau=d_i(\sigma)$ and $0$ otherwise. 
Note that  in any row,  the entries  in the same  column block have the same value but different grades. 
\end{itemize}

We will now show our procedure to compute the module $\boldsymbol{H_1(F,R)}$ with an example.

\begin{example}
Consider the multifiltration  $F:\N^2\rightarrow \text{Spaces}$ which on the square $\{ v\leq (2,2)\}\subset \N^2$ is described in Figure~\ref{presentation example} and 
for $w$ in $\N^2\setminus \{ v\leq (2,2)\}$, the  maps induced by  $\text{min}(w,(2,2))\leq w$ are the identities.
The simplicial complex $X=\text{colim}\, F$ is given by the complex $F(2,2)$ and we choose an ordering of its vertices as indicated also in Figure~\ref{presentation example}.
\begin{figure}[ht]
\centering
\includegraphics[width=0.4\textwidth]{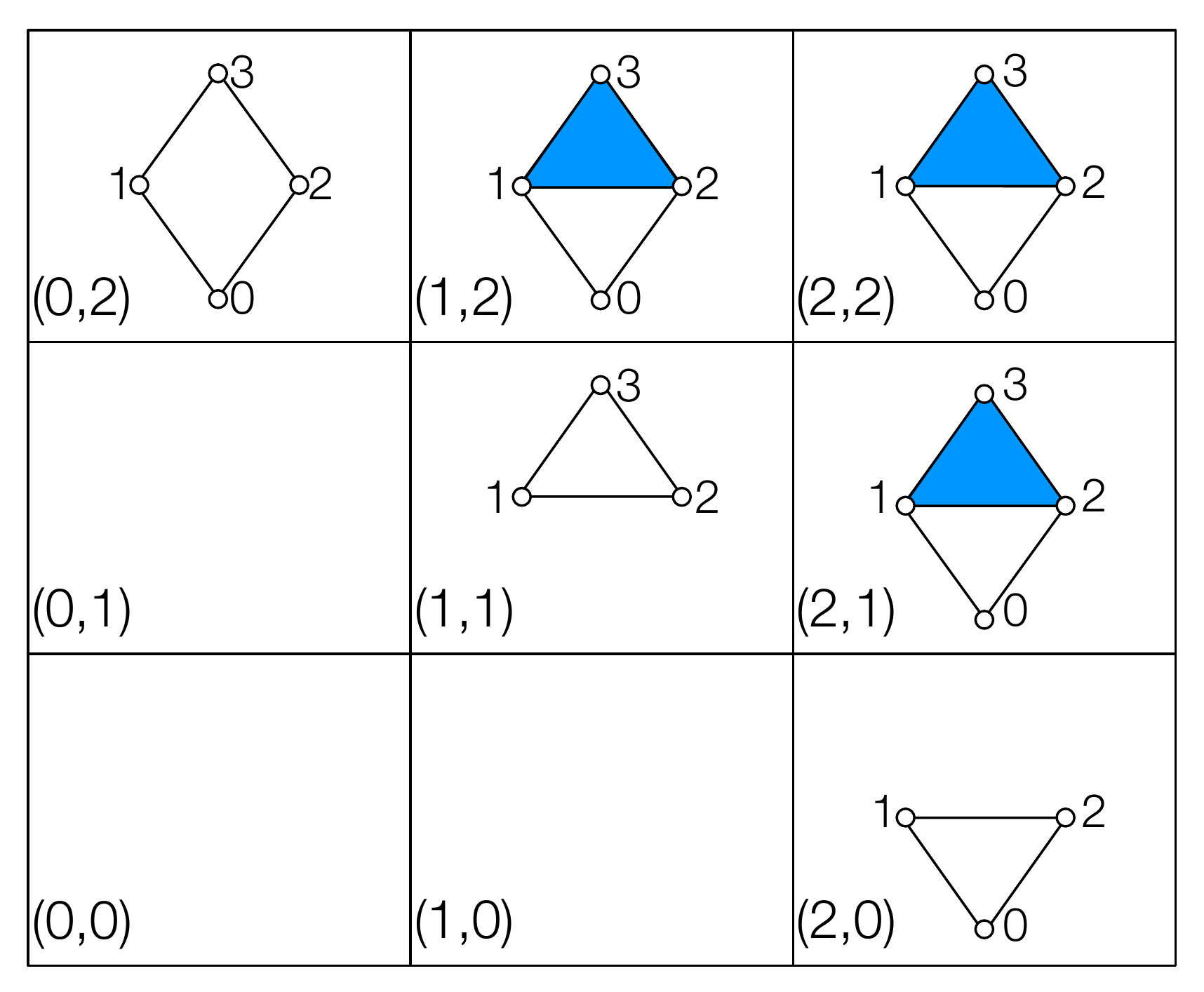}
\caption{}
\label{presentation example}
\end{figure}

The functors $H_1(F,R)\colon \N^2\to R\text{-Mod}$ on the square $\{ v\leq (2,2)\}\subset \N^2$ is given by the following commutative diagram:
\[
\xymatrix@C=15pt@R=15pt{
 R\rto^{1} & R\rto^{1} & R \\
 0\uto\rto & R\uto_{0} \rto^{0} & R\uto_{1} \\
 0\rto\uto & 0\uto\rto & R\uto_{1}
 }
 \]
We will now go through the steps presented above and construct the elements needed in Proposition \ref{presentation} to compute  $\boldsymbol{H_1(F,R)}$:

\begin{itemize}
\item $X_0=\{0,\ 1,\ 2,\ 3\}$, $X_1=\{0<1,\  0<2,\ 1<2,\  1<3,\  2<3\}$, \\$X_2=\{1<2<3\}$, and $X_n=\emptyset$ for $n\geq 3$.
\item For  an ordered simplex $\sigma$  in $X$, the minimal set of generators $\text{gen}(\sigma)$, ordered by the lexicographical order,   is given by the tables:
\medskip

\noindent \begin{tabular}[m]{|c|c|c|c|c|}
\hline
$\sigma$ & $0$ & $1$ & $2$ & $3$\\
\hline
$\text{gen}(\sigma)$ & (0,2) (2,0) &(0,2) (1,1) (2,0)& (0,2) (1,1) (2,0) & (0,2) (1,1) \\
\hline
\end{tabular}
\medskip

\noindent
\begin{tabular}[m]{|c|c|c|c|c|c|}
\hline
$\sigma$ & $0<1$ & $0<2$ & $1<2$ & $1<3$ & $2<3$ \\
\hline
$\text{gen}(\sigma)$ & (0,2) (2,0) &
(0,2) (2,0) & (1,1) (2,0) & (0,2) (1,1) &(0,2) (1,1)  \\
\hline
\end{tabular}
\medskip

\noindent
\begin{tabular}[m]{|c|c|}
\hline
$\sigma$ & $1<2<3$ \\
\hline
$\text{gen}(\sigma)$ &(1,2) (2,1)  \\
\hline
\end{tabular}
\medskip

\item We thus have:\\

\noindent
\begin{tabular}[m]{|c|c|}
\hline
$\boldsymbol{ R\mathcal{K}F_{1}}$ & $ 2\langle x^{(1,2)} \rangle \oplus  \langle  x^{(2,1)} \rangle\oplus 2\langle  x^{(2,2)}\rangle$ \\
\hline
$\boldsymbol{ R\mathcal{G}F_{1}}$ & $ 4\langle x^{(0,2)} \rangle \oplus 3\langle x^{(1,1)} \rangle\oplus 3 \langle  x^{(2,0)} \rangle$ \\
\hline
$\boldsymbol{ R\mathcal{G}F_{2}}$ & $\langle x^{(1,2)} \rangle \oplus \langle x^{(2,1)}\rangle$ \\
\hline
$\boldsymbol{ RD_{0}}$ & $4R[x_1,x_2]$\\
\hline
\end{tabular}
\medskip
\item
The matrix associated to $\boldsymbol{\pi_0-\pi_1}:\boldsymbol{R\mathcal{K}F_{1}}\to 
\boldsymbol{R\mathcal{G}F_{1}}$ with  the block decomposition and the column and row grades is given by: 
\[
\begin{blockarray}{ccc|c|c|c|c}
& & 0<1 &0<2 &1<2 &1<3 &2<3 \\
& &(2,2) &(2,2) & (2,1) & (1,2) &(1,2) \\
\begin{block}{cc(c|c|c|c|c)}
\multirow{2}{*}{$0<1$} &(0,2) & 1 & 0 & 0 & 0 & 0 \\
&(2,0) & -1 & 0 & 0 & 0 & 0\\
\cline{1-7}
\multirow{2}{*}{$0<2$} &(0,2) & 0 & 1 & 0 & 0 & 0 \\
&(2,0) & 0 & -1 & 0 & 0& 0 \\
\cline{1-7}
\multirow{2}{*}{$1<2$} &(1,1) & 0 & 0 & 1 & 0 & 0 \\
&(2,0) & 0 & 0 & -1 & 0 &0\\
\cline{1-7}
\multirow{2}{*}{$1<3$}&(0,2) & 0 & 0 & 0 & 1 & 0 \\
&(1,1) & 0 & 0 & 0 & -1 & 0 \\
\cline{1-7}
\multirow{2}{*}{$2<3$}&(0,2) & 0 & 0 & 0 & 0 & 1 \\
&(1,1) & 0 & 0 & 0 & 0 & -1\\
\end{block}
\end{blockarray}
 \]
\item
The matrix associated to $\sum_{j=0}^{2}(-1)^j \boldsymbol{\overline{d_i}}: \boldsymbol{R\mathcal{G}F_{2}}\to \boldsymbol{R\mathcal{G}F_{1}}$ with  the block decomposition and the column and row grades is given by: 
\[
\begin{blockarray}{cccc}
& &\BAmulticolumn{2}{c}{1<2<3 } \\
&&(1,2) &(2,1) \\
\begin{block}{cc(cc)}
\multirow{2}{*}{$0<1$} &(0,2) & 0 & 0  \\
&(2,0) & 0 & 0\\
\cline{1-4}
\multirow{2}{*}{$0<2$}&(0,2) & 0 & 0  \\
&(2,0) & 0 & 0\\
\cline{1-4}
\multirow{2}{*}{$1<2$}&(1,1) & 1 & 1 \\
&(2,0) & 0 & 0 \\
\cline{1-4}
\multirow{2}{*}{$1<3$}&(0,2) & -1 & 0  \\
&(1,1) & 0 & -1  \\
\cline{1-4}
\multirow{2}{*}{$2<3$}&(0,2) & 1& 0  \\
&(1,1) & 0 & 1\\
\end{block}
\end{blockarray}
 \]

\item

The matrix associated to $ \sum_{j=0}^{1}(-1)^j \boldsymbol{\alpha_j}:\boldsymbol{R\mathcal{G}F_{1}} \to \boldsymbol{RD_{0}}$ with the block decomposition and the column and row grades is given by:

\[
\begin{blockarray}{cccc|cc|cc|cc|cc}
& & \BAmulticolumn{2}{c|}{0<1} &  \BAmulticolumn{2}{c|}{0<2}  &  \BAmulticolumn{2}{c|}{1<2}  &
 \BAmulticolumn{2}{c|}{1<3}  &  \BAmulticolumn{2}{c}{2<3} \\
&&(0,2) &(2,0) & (0,2) & (2,0) &(1,1)&(2,0)&(0,2)&(1,1)&(0,2)&(1,1) \\
\begin{block}{cc(cc|cc|cc|cc|cc)}
0 &0 & -1& -1&-1& -1& 0 &0 &0&0&0&0 \\
\cline{1-12}
1 & 0 & 1 & 1 & 0& 0&-1& -1&-1&-1&0&0\\
\cline{1-12}
2 &0 & 0 & 0 & 1& 1& 1&  1&0&0& -1& -1\\
\cline{1-12}
3 & 0 & 0 & 0 & 0& 0& 0 & 0 &1&1&1&1\\
\end{block}
\end{blockarray}
 \]
\end{itemize}Ê

\end{example}

\section{Presentations of bifiltrations}\label{bifiltration}
Assume thatÊ $R$ Êis a field. For a general multifiltration of spaces, to get a presentation of its homology,
one can apply a standard algebra software to the exact sequence given in~\ref{prop homolgycomplexfree}.
In the case of a  bifiltration $F\colon\N^2\to\text{Spaces}$  one can try to be  more efficient.
Instead of  applying the software directly to the complex  given in~\ref{prop homolgycomplexfree}, one can first use 
 the fact that the polynomial ring $R[x_1,x_2]$ has the projective dimension $2$. This implies 
 that the kernel of any map between free modules is free. In particular  the kernel 
$\boldsymbol{Z}$ of the map 
$\sum_{j=0}^n (-1)^j\boldsymbol{\alpha_j}\colon \boldsymbol{R\mathcal{G}F_{n}}\to  \boldsymbol{RD_{n-1}}$
is free.  Let 
 $\phi\colon  \boldsymbol{R\mathcal{K}F_n}\oplus\boldsymbol{R\mathcal{G}F_{n+1}}\to  \boldsymbol{Z}$ 
be the map that fits into the following commutative diagram:
\[\xymatrix@C=50pt{
\boldsymbol{R\mathcal{K}F_n}\oplus\boldsymbol{R\mathcal{G}F_{n+1}}\rto^-{\phi}
\ar@/ _ 15pt/[rr]_-{\left[ \boldsymbol{\pi_0}-\boldsymbol{\pi_1}\ 
\sum_{i=0}^{n+1}(-1)^i\boldsymbol{\overline{d_i}}\right]}
 & \boldsymbol{Z}\ar@{^(->}[r] & 
\boldsymbol{R\mathcal{G}F_{n}}
}\]
The map  $\phi\colon  \boldsymbol{R\mathcal{K}F_n}\oplus\boldsymbol{R\mathcal{G}F_{n+1}}\to  \boldsymbol{Z}$   is  a free  presentation of  $\boldsymbol{H_n(F,R)}$.
 To take a full advantage of this  idea, one would need to be able to  describe in an efficient way a set of free generators
 of $\boldsymbol{Z}$. As of writing this paper, we have not found a method for doing it.

\section*{Acknowledgements} We would like to thank Bernd Sturmfels
for suggesting the problem of finding an efficient way to compute a presentation of multidimensional persistence modules.
 We would also like to thank Sandra Di Rocco for her advice and support and   
 Antonio Patriarca  for stimulating discussions.
 This paper would not be the
 same without valuable contributions of the referee.  

\addcontentsline{toc}{section}{References}

\end{document}